\documentclass{article}
\usepackage[utf8]{inputenc}

\usepackage[a4paper]{geometry}
\geometry{ a4paper, left=40mm, top=30mm, }

\newcommand{\nat}{\ensuremath {\mathbb N} }
\newcommand{\V}{\mathbb Var}
\newcommand{\Prob}{\mathbb{P}}

\newcommand{\E}{\mathbb E}

\newcommand{\F}{\mathcal{F}}
\newcommand{\T}{\mathcal{T}}

\newcommand{\Ss}{\mathcal{S}}


\newcommand{\mb}{\mathbb}

\usepackage[]{natbib}
\setcitestyle{aysep={}}

\usepackage{graphicx}
\usepackage[colorlinks=true, allcolors=blue]{hyperref}

\usepackage{paralist}
\usepackage{amsmath}
\usepackage{amssymb}
\usepackage{amsthm}
\usepackage{algorithm}
\usepackage{algpseudocode}
\usepackage{tikz}
\usepackage[]{todonotes}   

\linespread{1.1}

\newtheorem{theorem}{Theorem}[section]
\newtheorem{lemma}[theorem]{Lemma}
\newtheorem{corollary}[theorem]{Corollary}

\newtheorem{observation}[theorem]{Observation}
\newtheorem{conjecture}[theorem]{Conjecture}

\author{
Bogumi\l{} Kami\'nski\footnote{SGH Warsaw School of Economics, Warsaw, Poland} 
\and
Pawe\l{}~Pra\l{}at\thanks{Department of Mathematics, Ryerson University, Toronto, ON, Canada.}
}

\title{Subtrees of a random tree}

\begin{document}

\maketitle

\begin{abstract}
Let $\T$ be a random tree taken uniformly at random from the family of labelled trees on $n$ vertices. In this note, we provide bounds for $c(n)$, the number of sub-trees of $T$ that hold asymptotically almost surely. With computer support we show that $1.41805386^n \le c(n) \le 1.41959881^n$. Moreover, there is a strong indication that, in fact, $c(n) \le 1.41806183^n$.
\end{abstract}

\section{Introduction}

In this paper, we are concerned with the problem of finding bounds for the number of sub-trees of a random tree on $n$ vertices. Clearly, the path $P_n$ and, respectively, the star $K_{1,n-1}$ have the most and the least sub-trees among all trees of order $n$. The binary trees that maximize or minimize the number of sub-trees are characterized in~\cite{Szekely2005,Szekely2007}. There is an unexpected connection between the binary trees which maximize the number of subtrees and the binary trees which minimize the Wiener index, a chemical index widely used in biochemistry; the the \emph{Wiener index} is defined as the sum of all pairwise distances between vertices~\cite{Wiener1947}. Subtrees of trees with given order and maximum vertex degree are studied in~\cite{Kirk2008}. The extremal trees coincide with the ones for the Wiener index as well. Finally, trees with given order and given degree distribution was considered in~\cite{Zhang2013}. 

\medskip

In this paper, we investigate $c(n)$, the number of subtrees of a random tree $\T$ taken uniformly at random from the family of labelled trees on $n$ vertices. The tree $\T$ is called a \emph{random tree} (or \emph{random Cayley tree}). The classical approach to the study of the properties of $\T$ was purely combinatorial, that is, via counting trees with certain properties. In this way, R\'enyi and Szekeres, using complex analysis, investigated the height of $\T$. Perhaps surprisingly, it turns out that the typical height is of order $\sqrt{n}$~\cite{Renyi1967}. Now, a useful relationship between certain characteristics of random trees and branching processes is established. In fact, recently and independently of this work,~\cite{CaiJanson} investigated the number of subtrees in a conditioned Galton–Watson tree of size $n$. They showed that $\log(c(n))$ has a Central Limit Law and that the moments of $c(n)$ are of exponential scale. In this paper, instead of exploiting this probabilistic point of view we approach the problem through combinatorial perspective which, presumably, gives stronger asymptotic bounds for $c(n)$. For more on random trees see, for example,~\cite{BookFK} or~\cite{MR3616205}.

\medskip

Our main results are presented in Section~\ref{sec:theory}. After introducing the notation we move to a lower bound that does not require computer support; see Section~\ref{sec:lower_bound_1}. The strongest lower bound, with support of a computer, is presented in Section~\ref{sec:lower_bound_2} culminating with Theorem~\ref{thm:lower-computer} which gives $c(n) \ge 1.41805^n$. The strongest upper bound can be found in Section~\ref{sec:upper_bound}; Theorem~\ref{thm:upper-computer}
implies that $c(n) \le 1.41960^n$. In the final section of the paper, Section~\ref{sec:conclustion}, we present a conjecture (that we are rather confident is true) that would determine the first 5 digits of $\sqrt[n]{c(n)}$; see Conjecture~\ref{con:upper_bound}. There is also a short discussion of the outcome of applying the general result of~\cite{Zhang2013} on the number of subtrees of a tree with a given order and degree distribution. The final subsection discusses briefly complementary simulations that we performed during this project. 






\medskip

The numerical results presented in this paper (in particular, Tables \ref{tab:tab1} and \ref{tab:tab2}) were obtained using Julia language~\cite{Julia}. The computations were performed on AWS EC2 taking in total approximately 1,000 hours of computing.

\section{Theoretical bounds}\label{sec:theory}

\subsection{Asymptotic notation}

Each time we refer to $\T$ in this paper, we consider a labelled tree on the vertex set $[n]$ taken uniformly at random from the set of all labelled trees on $n$ vertices. As typical in random graph theory, we shall consider only asymptotic properties of $\T$ as $n\rightarrow \infty$. We emphasize that the notations $o(\cdot)$ and $O(\cdot)$ refer to functions of $n$, not necessarily positive, whose growth is bounded. We use the notations $f \ll g$ for $f=o(g)$ and $f \gg g$ for $g=o(f)$. We also write $f(n) \sim g(n)$ if $f(n)/g(n) \to 1$ as $n \to \infty$ (that is, when $f(n) = (1+o(1)) g(n)$). We say that an event in a probability space holds \emph{asymptotically almost surely} (\emph{a.a.s.}) if its probability tends to one as $n$ goes to infinity.

\subsection{Pr\"ufer code}

Let us start with recalling a classic result that will be useful in our analysis. \emph{Pr\"ufer code} of a labelled tree $T$ on $n$ vertices is a unique sequence from $[n]^{n-2}$ (the set of sequences of length $n-2$, each term is from the set $[n]=\{1,2,\ldots,n\}$) associated with tree $T$~\cite{Prufer1918}. In fact, there exists a bijection from the family of labelled trees on $n$ vertices and the set $[n]^{n-2}$. This, in particular,  implies that the Cayley's formula holds: the number of labelled trees on $n$ vertices is $n^{n-2}$. More importantly, it gives us a way to generate a random labelled tree by simply selecting a random element from $[n]^{n-2}$ and considering the corresponding tree $\T$. 

Suppose a labelled tree $T$ has a vertex set $[n]$. One can generate a Pr\"ufer code of $T$ by iteratively removing vertices from the tree until only two vertices remain. At step $i$ of this process, remove the leaf with the smallest label and set the $i$th element of the Pr\"ufer code to be the label of this leaf's neighbour.

\subsection{Lower bound: trivial approach}

Consider the Pr\"ufer code of $\T$. Clearly, the degree of any vertex $v$ is the number of times $v$ appears in the code plus 1. It follows that for any $v \in [n] = V(\T)$ and any $k\in \nat$,
\begin{equation}\label{eq:degree_distr}
\Prob \left( \deg(v) = k \right) = \binom{n-2}{k-1} \left( \frac {1}{n} \right)^{k-1} \left( 1 - \frac {1}{n} \right)^{n-k-1}  \sim \frac {e^{-1}}{(k-1)!}. 
\end{equation}

Now, let $X_1$ be the number of leaves of $\T$. From above it follows that $\E[X_1] \sim n/e$ and we can easily prove (using, say, the second moment method) that a.a.s.\ $X_1 \sim n / e$. (We will prove a more general result below---see Lemma~\ref{lemma:types}---so we skip a formal argument here.) One can select \emph{all} non-leaves and then \emph{any} subset of the leaves to form a sub-tree. (Note that any subset of leaves can be safely removed and so any choice results with a connected graph.) We get the following lower bound that holds a.a.s.:
$$ 
c(n) \ge 2^{X_1} = 2^{(1/e + o(1))n} = \left( 2^{1/e} + o(1) \right)^n \ge 1.29045^n.
$$ 

\subsection{Lower bound: warming up on a piece of paper...}\label{sec:lower_bound_1}

The reason for this section is twofold. First of all, we present a lower bound that does not require computer support. Another reason is to prepare the reader for a more sophisticated argument presented in the next section that will give a stronger bound but will require computer support. 

\begin{theorem}\label{thm:lower-trivial}
A.a.s.\ $c(n) \ge 1.37135^n$.
\end{theorem}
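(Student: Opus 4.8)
The plan is to improve on the trivial bound $2^{X_1}$ by looking not only at leaves but also at small "pendant" configurations hanging off the tree, each of which contributes an independent multiplicative factor to the number of subtrees. Concretely, I would fix a small collection of rooted trees (``gadgets'') $G_1, G_2, \dots, G_m$ --- e.g.\ a single leaf, a path of length $2$, a cherry (two leaves attached to a common vertex), etc.\ --- and count, for each $i$, the number $X_i$ of vertices $v \in V(\T)$ such that the subtree of $\T$ hanging below $v$ (after rooting $\T$ appropriately, or in the sense of ``$v$ together with everything reachable without passing through a designated neighbour'') is isomorphic to $G_i$ \emph{and} these occurrences are vertex-disjoint. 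If gadget $G_i$ can be ``deleted from the outside'' in $s_i$ distinct ways so that what remains is still a subtree (i.e.\ $s_i$ is the number of subsets of $V(G_i)$ that induce a connected subgraph containing the attachment point, or are empty), then selecting independently for each occurrence how to truncate it multiplies the subtree count, giving
\[
c(\T) \ \ge\ \prod_{i=1}^m s_i^{\,X_i}.
\]

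The key steps, in order, are: (1) choose the gadget family and compute each $s_i$ by hand (these are tiny finite computations); (2) compute $\E[X_i]$ using the Pr\"ufer-code degree distribution \eqref{eq:degree_distr} --- more precisely, the probability that a given small neighbourhood of $\T$ has a prescribed shape is asymptotically a product of Poisson-type factors $e^{-1}/(k-1)!$ over the internal degrees, so $\E[X_i] = (\gamma_i + o(1))\, n$ for an explicit constant $\gamma_i$; (3) invoke the second-moment method (in the spirit of the promised Lemma \ref{lemma:types}, which I am allowed to assume) to conclude that a.a.s.\ $X_i \sim \gamma_i n$ simultaneously for all $i$; and (4) optimise: since the occurrences are disjoint and the $\gamma_i$ sum to at most $1$, take logarithms to get $\tfrac1n \log_2 c(\T) \ge \sum_i \gamma_i \log_2 s_i + o(1)$, and check numerically that a good choice of gadgets pushes $\sum_i \gamma_i \log_2 s_i$ above $\log_2 1.37135$. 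One should double-check that $2^{1/e} \approx 1.2904$ already comes out of the single-leaf gadget, so any genuine improvement must come from the heavier gadgets.

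The main obstacle I expect is two-fold and essentially bookkeeping rather than conceptual. First, one must be careful that the chosen gadget occurrences really are vertex-disjoint and that truncating them independently always yields a genuine subtree of $\T$ --- this forces the gadgets to be ``pendant'' (attached to the rest of the tree at a single cut vertex) and forces us to count each vertex of $\T$ in at most one gadget, which is why a clean way to organise the count is to root $\T$ and process it bottom-up, claiming a maximal gadget at the lowest available vertex. Second, computing $\E[X_i]$ requires the joint distribution of the shape of a bounded-size rooted neighbourhood of a uniformly random vertex of $\T$; the honest way to get this is again via the Pr\"ufer code, showing that the relevant indicator variables have the claimed asymptotic expectations and bounded-dependence so that $\V[X_i] = o(\E[X_i]^2)$. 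Since the paper defers the analogous computation to Lemma \ref{lemma:types}, I would simply cite that lemma for the concentration and devote the visible work here to (a) listing the gadgets with their $s_i$ and $\gamma_i$, and (b) the final numerical optimisation yielding the constant $1.37135$.
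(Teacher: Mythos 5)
Your plan is essentially the paper's own proof: its ``gadgets'' are the pendant stars $K_{1,k-1}$ ($2\le k\le\gamma$) together with the leftover leaves, the truncation counts are $s_k=2^{k-1}+1$ (and $2$ per leftover leaf), the occurrence densities $X_k\sim n e^{-k}/(k-1)!$ and $|L|\sim(e^{-1}-e^{1/e-2})n$ come from the Pr\"ufer code with second-moment concentration (as in Lemma~\ref{lemma:types}), and the bound is the product $2^{|L|}\prod_k(2^{k-1}+1)^{X_k}$ over the vertex-disjoint pendant pieces, exactly your $\prod_i s_i^{X_i}$. The one step you defer---fixing the gadget family and certifying the numerical constant---is precisely what this concrete choice supplies in closed form, since $2^{e^{-1}-e^{1/e-2}}\prod_{k\ge2}(2^{k-1}+1)^{e^{-k}/(k-1)!}>1.37135$, so your outline is correct and coincides with the paper's argument once instantiated.
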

\begin{proof}
Let $\gamma$ be a sufficiently large integer that will be determined soon. For $k \in \{2, 3, \ldots, \gamma\}$, let $X_k$ be the number of subsets $S \subseteq [n]$ of size $k$ that induce a star ($K_{1,k-1}$) and the only edge connecting $S$ to the rest of $\T$ is adjacent to the center of the star. In particular, the $k-1$ leaves of the star are leaves in $\T$.

Trivial, but important, property is that vertices of $\T$ that belong to $K_{1,k-1}$ cannot be part of some other $K_{1,k'-1}$ for some $k'$ (that could be equal to $k$ but does not have to be). We put vertex $v$ of $\T$ (together with the $k-1$ leaves adjacent to $v$) into $C_k$ if $v$ belongs to some $K_{1,k-1}$. As a result, we partition the vertex set into a family of classes $C_k$ ($k\in\{2, 3, \ldots, \gamma\}$; $C_k$ contains $X_k$ stars and so it contains $X_k \cdot k$ vertices), leaves $L$ that are not part of any earlier class, and $R$ that contains the remaining vertices of $\T$.  

By considering a random Pr\"ufer code, we get that a.a.s., for any $k\in\{2, 3, \ldots, \gamma\}$
$$
X_k \sim \binom{n}{k} k \left( \frac {1}{n} \right)^{k-1} \left(1- \frac {k}{n} \right)^{n-k-1} \sim \frac {n e^{-k}}{(k-1)!};
$$
there are $\binom{n}{k}$ choices for $S$, $k$ choices for the root, each leaf selects the root with probability $1/n$, with probability $(1-k/n)^{n-k-1}$ no vertex picked leaves and no vertex other than the leaves picked the root. (More general result will be proved in the next subsection---see Lemma~\ref{lemma:types}.) The number of leaves in $L$ is a.a.s.
$$
|L| ~~=~~ |X_1|-\sum_{k=2}^{\gamma} X_k \cdot (k-1) ~~\sim~~ \left( e^{-1} - \sum_{k=2}^{\gamma} \frac {e^{-k}}{(k-1)!} (k-1) \right) n ~~=~~ \beta_L\ n,
$$
where $\beta_L=\beta_L(\gamma)$ is the constant that can be made arbitrarily close to 
$$
\hat{\beta}_L ~~:=~~ e^{-1} - \sum_{k\ge 2} \frac {e^{-k}}{(k-2)!} ~~=~~ e^{-1} - e^{1/e-2} ~~\approx~~ 0.1724
$$
by taking $\gamma$ large enough. 
The number of rooted sub-trees of $K_{1,{k-1}}$ (including the empty tree) is clearly $2^{k-1}+1$. Hence, we get the following lower bound for $c(n)$ by taking all vertices of $R$, any subset of $L$, and any rooted sub-trees from classes $C_k$: a.a.s.
\begin{eqnarray*}
c(n) &\ge& 2^{ |L| } \prod_{k=2}^{\gamma} \left( 2^{k-1} + 1 \right)^{X_k} ~~=~~ \left( 2^{\beta_L+o(1)} \prod_{k=2}^{\gamma} (2^{k-1}+1)^{e^{-k}/(k-1)!+o(1)} \right)^n \\
&=& \left( 2^{\beta_L} \prod_{k=2}^{\gamma} (2^{k-1}+1)^{e^{-k}/(k-1)!} +o(1) \right)^n = \Big( \beta + o(1) \Big)^n,
\end{eqnarray*}
where $\beta=\beta(\gamma)$ is the constant that can be made arbitrarily close to 
$$
\hat{\beta} ~~:=~~  2^{\hat{\beta}_L} \prod_{k\ge2} (2^{k-1}+1)^{e^{-k}/(k-1)!} = 2^{e^{-1} - e^{1/e-2}} \prod_{k\ge2} (2^{k-1}+1)^{e^{-k}/(k-1)!} > 1.37135
$$
by taking $\gamma$ large enough. The desired bound holds.
\end{proof}

\subsection{Lower bound: computer assisted argument}\label{sec:lower_bound_2}

In this section, we generalize the strategy we considered in the previous section. Instead of restricting ourselves to stars, we investigate all possible trees on $k$ vertices, where $k \le K$ for some value of $K$. Unfortunately, it seems impossible to find a closed formula for the number of trees with a given number of sub-trees but, with computer assist, we can do it even for relatively large values of $K$. As before, one could include an (arbitrarily large) family of stars but this improvement is negligible and so we do not do it. 

\medskip

Fix some $K \in \nat$. We start with a few important definitions.

\subsubsection*{Family $\F_k$}

For each $k \in [K]$, let $\F_k$ be the family of rooted trees on $k$ vertices; that is, each member of $\F_k$ is a pair $(T,v)$, where $T$ is a labelled tree on the vertex set $[k]$ and $v \in [k]$. Clearly, $|\F_k| = k^{k-2} \cdot k = k^{k-1}$. Finally, let $\F = \bigcup_{k=1}^K \F_k$.

\subsubsection*{Vertices of type $(T,v)$ and internal vertices}

For each vertex $v$ of $\T$, we consider $\ell=\deg(v)$ sub-trees of $\T$ ($T_1, T_2, \ldots, T_\ell$), all of them rooted at $v$, that are obtained by removing one of the $\ell$ edges adjacent to $v$. Now, each $T_i$ (on $k_i$ vertices) is re-labelled so that labels are from $[k_i]$ but the relative order is preserved. Since we aim for asymptotic results, we may assume that $n > 2K$ and so at most one such rooted tree, say $(T_1,v)$, belongs to $\F$. If this is the case, then we say that $v$ is \emph{of type $(T_1,v)$} and that it \emph{induces} rooted tree $(T_1,v)$; otherwise, we say that $v$ is an \emph{internal} vertex.

\subsubsection*{Partition of the vertex set of $\T$}

We partition the vertex set of $\T$ (set $[n]$) as follows. We start the process at \emph{round} $K$. (It will be convenient to count rounds from $K$ down to 1.) For each vertex of type $(T,v)$, for some $(T,v) \in \F_K$, we put all the vertices of the rooted sub-tree it induces into class $C(T,v)$. Note that no vertex of $\T$ belongs to more than one sub-tree as we consider only types from $\F_K$ (trees of a fixed size). Hence, in particular, the classes created so far are mutually disjoint. On the other hand, all vertices of type different than $(T,v)$ that are placed into class $C(T,v)$ are of type from $\F \setminus \F_K$. Hence, in order to avoid placing one vertex into more than one class, we need to ``trim'' the tree and remove all vertices that are already placed into some class. Round $K$ is finished and now we move to the next round, round $K-1$, in which vertices of types from $\F_{K-1}$ are considered and proceed the same way. (Note that not all of them are removed during round $K$.) We do it recursively all the way down to round $1$ during which $\F_1$ is considered and so the remaining leaves of $\T$ are trimmed. The only vertices left are internal one which are placed into set $R$. We obtain the following partition of $[n]$:
$
\{ C(T,v) : (T,v) \in \F \} \cup \{ R \}.
$

\bigskip

We start with estimating the number of vertices of each type.

\begin{lemma}\label{lemma:types}
For any $K \in \nat$, the following property holds a.a.s. For any $(T,v) \in \F_k$ for some $k \in [K]$, the number of vertices of type $(T,v)$ is $(1+o(1)) n e^{-k}/k!$.
\end{lemma}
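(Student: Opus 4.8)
The plan is to reformulate the count combinatorially and then apply the first and second moment methods. First I would record the following consequence of $n>2K$: a vertex $u$ of $\T$ is of type $(T,v)$ for some $(T,v)\in\F_k$ if and only if there is a (necessarily unique) edge $e$ incident to $u$ such that the component of $\T-e$ containing $u$ has exactly $k$ vertices and, after the order-preserving relabelling, equals $(T,v)$ (with $u$ playing the role of the root $v$). Uniqueness holds because if two such edges existed, each of the two ``small sides'' would have at most $K$ vertices, forcing the branch on its opposite side to have at least $n-K$ vertices, and hence the first small side would actually contain at least $1+(n-K)>K$ vertices --- a contradiction. Consequently, writing $S$ for the vertex set of that component, the number $X=X_{(T,v)}$ of vertices of type $(T,v)$ equals the number of $k$-subsets $S\subseteq[n]$ that are \emph{$(T,v)$-pendant}, meaning: $\T[S]$ is connected, exactly one edge of $\T$ joins $S$ to $[n]\setminus S$, that edge meets $S$ in the vertex $r_S$ which the order isomorphism $S\to[k]$ sends to $v$, and this isomorphism carries $\T[S]$ rooted at $r_S$ onto $(T,v)$; the map $S\mapsto r_S$ is a bijection from such sets onto the vertices of type $(T,v)$.

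For the first moment, I fix a $k$-subset $S$ (which determines $r_S$) and count the labelled trees on $[n]$ in which $S$ is $(T,v)$-pendant: the structure on $S$ is forced, and what remains is an arbitrary labelled tree on the $n-k$ vertices of $[n]\setminus S$ together with one edge from $r_S$ to one of those vertices --- by Cayley's formula there are $(n-k)^{n-k-2}\cdot(n-k)=(n-k)^{n-k-1}$ of them. Hence $\Prob(S\text{ is }(T,v)\text{-pendant})=(n-k)^{n-k-1}/n^{n-2}$, and summing over the $\binom nk$ choices of $S$, using $\binom nk\sim n^k/k!$ and $(1-k/n)^{n-k-1}\to e^{-k}$,
$$
\E[X]=\binom nk\,\frac{(n-k)^{n-k-1}}{n^{n-2}}=\binom nk\Big(1-\tfrac kn\Big)^{n-k-1}n^{-(k-1)}\ \sim\ \frac{n\,e^{-k}}{k!}.
$$

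For the second moment I would use that two distinct $(T,v)$-pendant sets are disjoint: the subsets of $V(\T)$ arising as one side of an edge cut of a tree form a cross-free family (any two are nested, disjoint, or together cover $[n]$), and since both of ours have exactly $k<n/2$ vertices and are distinct, they must be disjoint. Thus $X^2=X+\sum \mathbf{1}[S\text{ and }S'\text{ are }(T,v)\text{-pendant}]$, the sum over ordered pairs of \emph{disjoint} $k$-subsets. For disjoint $S,S'$ the same contraction argument (contract $S$ and $S'$ to two new vertices and count labelled trees on the resulting $n-2k+2$ vertices in which both new vertices are leaves, namely $(n-2k)^{n-2k}$ of them) gives $\Prob(S,S'\text{ both }(T,v)\text{-pendant})=(n-2k)^{n-2k}/n^{n-2}$; a short Taylor expansion shows the ratio of this to $\big((n-k)^{n-k-1}/n^{n-2}\big)^2$ is $1+O(1/n)$, uniformly in $k\le K$. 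Combining this with the facts that $\E[X]=O(n)$ and that the product $\E[\mathbf 1_S]\,\E[\mathbf 1_{S'}]$ summed over the at most $O(n^{2k-1})$ \emph{non}-disjoint distinct pairs is also $O(n)$, one gets $\E[X^2]=(1+o(1))\E[X]^2$, whence Chebyshev's inequality yields $X\sim n e^{-k}/k!$ a.a.s. Since $K$ is fixed, $\F$ is finite, so a union bound over the finitely many pairs $(T,v)$ preserves ``a.a.s.'' and the uniformity of the $o(1)$.

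I expect the main difficulty to be bookkeeping rather than a single hard step: pinning down the bijection between type-$(T,v)$ vertices and $(T,v)$-pendant subsets (in particular the ``at most one small branch'' observation and the cross-free structure of edge cuts, which is precisely what makes the second moment tractable by forcing all overlapping cross terms to vanish), and then being careful enough in the second-moment estimate to confirm that all the genuinely lower-order contributions are $o(\E[X]^2)$.
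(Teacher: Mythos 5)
Your proof is correct and follows essentially the same route as the paper: a second-moment argument over indicators of $k$-subsets forming pendant copies of $(T,v)$, with the key observation that two distinct such subsets cannot intersect, so all covariance terms except those for disjoint pairs vanish. The only real difference is cosmetic: you obtain the single and joint probabilities by direct counting via Cayley's formula (giving $(n-k)^{n-k-1}/n^{n-2}$ and $(n-2k)^{n-2k}/n^{n-2}$), whereas the paper reads the same quantities off the Pr\"ufer code, and you spell out the uniqueness and cross-free/disjointness facts that the paper only asserts implicitly.
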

\begin{proof}
The argument is a straightforward application of the second moment method. Fix any $k \in [K]$ and $(T,v) \in \F_k$; we will show that the desired bound holds a.a.s.\ for this choice. This will finish the proof as the number of choices for $k$ and $(T,v)$ is bounded and so the conclusion holds by the union bound.

For any $S \subseteq [n]$, $|S|=k$, let $I(S)$ be the indicator random variable that set $S$ induces a tree $T$ rooted at $v$ (after relabelling preserving the order of vertices of $S$) and the only edge from $S$ to its complement is adjacent to a vertex re-labelled as $v$. The number of vertices of type $(T,v)$ is
$$
X = \sum_{S \subseteq [n], |S|=k} I(S).
$$
For any $S$ we have 
\begin{eqnarray*}
p &:=& \Prob(I(S) = 1) = \left( \frac {1}{n} \right)^{k-1} \left( 1 - \frac {k}{n} \right)^{n-k-1} \sim n^{-(k-1)} e^{-k}.
\end{eqnarray*}
Indeed, without loss of generality, we may assume that $S = \{1, 2, \ldots, k\}$. Then, the first $k-1$ terms of the Pr\"ufer code of $\T$ are completely determined by $T$ and $v$ 	(hence term $(1/n)^{k-1}$); moreover, the remaining $(n-2)-(k-1)=n-k-1$ terms cannot be from $S$ (hence term $(1-k/n)^{n-k-1}$). It follows that
$$
\E [X] = \binom{n}{k} p \sim \frac {n^k p}{k!} \sim \frac {ne^{-k}}{k!}.
$$

Now, 
\begin{eqnarray*}
\V [x] &=& \V \left[ \sum_{S \subseteq [n], |S|=k} I(S) \right] \\
&=& \sum_{S,S' (*)} \left( \Prob(I(S)=1, I(S')=1) - \Prob(I(S)=1)^2 \right) \\
&& \quad + \sum_{S} \left( \Prob(I(S)=1) - \Prob(I(S)=1)^2 \right),
\end{eqnarray*}
where $(*)$ means that the sum is taken over all pairs of sets $S, S' \subseteq [n]$ with $|S|=|S'|=k$. The second term in the last sum can be dropped to get and upper bound of $\E[X]$ for the last sum. More importantly, note that if $S$ and $S'$ intersect, then $\Prob(I(S)=1, I(S')=1)=0$. Hence,
$$
\V [x] ~~\le~~ \sum_{S,S' (**)} \left( \Prob(I(S)=1, I(S')=1) - \Prob(I(S)=1)^2 \right) + \E[X],
$$
where $(**)$ means that the sum is taken over all pairs of disjoint sets $S, S' \subseteq [n]$ with $|S|=|S'|=k$. For any such pair,
\begin{eqnarray*}
q &:=& \Prob(I(S)=1, I(S')=1) - \Prob(I(S)=1)^2 \\
&=& \left( \frac {1}{n} \right)^{2(k-1)} \left( 1 - \frac {2k}{n} \right)^{(n-2)-2(k-1)} - \left( \left( \frac {1}{n} \right)^{k-1} \left( 1 - \frac {k}{n} \right)^{(n-2)-(k-1)} \right)^2 \\
&=& \left( \frac {1}{n} \right)^{2(k-1)} \left( \left( 1 - \frac {2k}{n} \right)^{n-2k} - \left( 1 - \frac {k}{n} \right)^{2n-2k-2} \right).
\end{eqnarray*}
Using the fact that $1-x = \exp (-x-x^2/2 + O(x^3))$ and then that $\exp(x) = 1 + x + O(x^2)$, we get
\begin{eqnarray*}
q &:=& n^{-2(k-1)} \left( \exp \left( -2k + \frac {2k^2}{n} + O(n^{-2}) \right) - \exp \left( -2k + \frac {k^2 + 2k}{n} +O(n^{-2}) \right) \right) \\
&\sim& n^{-2(k-1)} e^{-2k} \left( 1 - \exp \left( \frac {-k^2 + 2k}{n} +O(n^{-2}) \right) \right) ~~\sim~~ \frac {p^2 (k^2-2k)}{n}.
\end{eqnarray*}
It follows that
$$
\V [x] ~~\le~~ \binom{n}{k} \binom{n-k}{k} q + \E[X] ~\sim~ \left( \binom{n}{k} p \right)^2 \frac {k^2-2k}{n} + \E[X] ~=~ o(\E[X]^2).
$$
The second moment method implies that a.a.s.\ $X \sim \E[X]$ and the proof is finished.
\end{proof}

\bigskip

Now, we are ready to analyze the trimming process that yields the desired partition of the vertex set of $\T$. 

\begin{lemma}\label{lemma:c_tv}
For any $K \in \nat$, the following property holds a.a.s. For any $(T,v) \in \F_k$ for some $k \in [K]$,
$$
\frac {|C(T,v)|}{n} \sim k \cdot f_K(k), \quad \text{ where } f_K(k) := \frac {e^{-k}}{k!} - \sum_{\ell=k+1}^{K} 
(\ell-k)^{\ell-k-1} \binom{\ell}{\ell-k} \frac {e^{-\ell}}{\ell!}.
$$
\end{lemma}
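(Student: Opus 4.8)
The plan is to carry out a purely deterministic combinatorial analysis of the trimming process, valid for every tree on $[n]$ with $n>2K$ (recall we may assume $n>2K$), and only at the very end invoke Lemma~\ref{lemma:types} together with a union bound to upgrade the conclusion to an a.a.s.\ statement; in particular all of the probabilistic content will sit inside Lemma~\ref{lemma:types}, and the real work is the structural description of which vertices get ``processed'' by the trimming.

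First I would set up the local picture. For a non-internal vertex $w$, of type $(T,v)\in\F_k$ say, write $T_w$ for the rooted sub-tree it induces (on $|T_w|=k$ vertices, rooted at $w$), and let $p(w)$ be the unique neighbour of $w$ lying outside $T_w$; thus $T_w$ is exactly the component containing $w$ after deleting the edge $wp(w)$, and $p(w)$ sits on the unique ``huge'' (size $\ge n-K$) branch at $w$. The key deterministic fact is a nesting lemma: if $u\ne w$ are non-internal and $w\in T_u$, then $T_w\subsetneq T_u$ (hence $|T_w|<|T_u|$ and $p(w)\in T_u$). This is an elementary case analysis on which branch at a vertex is huge, using that at most one branch can be huge when $n>2K$. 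It follows that any two members of $\{T_u\}$ are nested or disjoint, that $\{T_u:w\in T_u\}$ is a chain whose smallest member (when nonempty) is $T_{p(w)}$, and therefore that $T_w$ is inclusion-maximal among all induced sub-trees if and only if $p(w)$ is internal.

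Next I would analyse the rounds by downward induction on $\ell=K,K-1,\dots,1$, proving that a non-internal vertex $u$ is \emph{processed} --- i.e.\ all of $T_u$ is still present when round $|T_u|$ is reached, and is then placed into the class of $u$'s type as one block --- precisely when $T_u$ is inclusion-maximal; that distinct processed vertices have disjoint induced sub-trees; and that the vertices never placed into any class are exactly the internal ones, so they make up $R$. Given the nesting lemma the inductive step is short: if $T_u$ is maximal of size $\ell$, nothing properly containing $u$ has size $>\ell$, so $u$ survives to round $\ell$ (and so does all of $T_u$) and is processed there; if $T_u$ is not maximal, the maximal $T_{u'}\supsetneq T_u$ has $|T_{u'}|>\ell$ and by the inductive hypothesis was already processed, removing $u$. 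Granting this, for $(T,v)\in\F_k$ we obtain the exact identity $|C(T,v)|=k\,M(T,v)$, where $M(T,v)$ is the number of vertices $w$ of type $(T,v)$ with $p(w)$ internal; equivalently,
$$M(T,v)=\#\{w:w\text{ has type }(T,v)\}-\sum_{\ell=k+1}^{K}\#\{w:w\text{ has type }(T,v)\text{ and }|T_{p(w)}|=\ell\},$$
where the range of $\ell$ is forced because $p(w)$ non-internal implies $T_w\subsetneq T_{p(w)}$, hence $|T_{p(w)}|\ge k+1$.

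Finally, a short combinatorial identity converts the inner sum into the constants appearing in $f_K(k)$. By the nesting lemma, a vertex $w$ of type $(T,v)$ with $|T_{p(w)}|=\ell$ is exactly a child of the root of the induced sub-tree $T_{p(w)}$ whose own descendant sub-tree, after order-preserving relabelling, equals $(T,v)$; hence the inner count equals $\sum_{(T',u)\in\F_\ell}\psi_{(T,v)}(T')\cdot\#\{w':w'\text{ has type }(T',u)\}$, where $\psi_{(T,v)}(T')$ is the number of children of the root of $T'$ whose descendant sub-tree, relabelled, is $(T,v)$. A one-line bijection then gives $\sum_{(T',u)\in\F_\ell}\psi_{(T,v)}(T')=(\ell-k)^{\ell-k-1}\binom{\ell}{\ell-k}$: choose the $\ell-k$ labels not used by the copy of $(T,v)$ (there are $\binom{\ell}{\ell-k}$ ways), put an arbitrary rooted labelled tree on them ($(\ell-k)^{\ell-k-1}$ ways, its root being $p(w)$), and attach the forced $(T,v)$-structure on the remaining $k$ labels as a new child of that root. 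Combining everything, and applying Lemma~\ref{lemma:types} to each of the finitely many types of size at most $K$ --- so that a.a.s.\ every count $\#\{w':w'\text{ has type }(T',u)\}$ equals $(1+o(1))\,n\,e^{-|T'|}/|T'|!$ simultaneously, and hence, by one more union bound, the estimate holds for all $(T,v)\in\F$ at once --- we get a.a.s.
$$\frac{|C(T,v)|}{n}\ \sim\ k\left(\frac{e^{-k}}{k!}-\sum_{\ell=k+1}^{K}(\ell-k)^{\ell-k-1}\binom{\ell}{\ell-k}\frac{e^{-\ell}}{\ell!}\right)=k\,f_K(k).$$
The main obstacle is the deterministic structural analysis above (the nesting lemma and the downward induction pinning down ``processed'' as ``inclusion-maximal $T_u$''); once that is in place the combinatorial identity is routine and Lemma~\ref{lemma:types} does the rest.
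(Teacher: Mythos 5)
Your proposal is correct, and it reaches the stated closed form for $f_K(k)$ by a genuinely different route than the paper, even though the outer frame (condition on Lemma~\ref{lemma:types}, then analyse the trimming deterministically, then a union bound over the finitely many types) is the same. The paper tracks the process round by round: it shows that the density of type-$(T,v)$ vertices still alive at round $k$ satisfies the recursion $\hat{f}_K(k)=e^{-k}/k!-\sum_{\ell>k}(\ell-k)^{\ell-k}\binom{\ell}{\ell-k}\hat{f}_K(\ell)$, where the coefficient counts, via Pr\"ufer codes, pairs consisting of a rooted tree on $\ell$ vertices and an embedded vertex of type $(T,v)$, and the multiplier is the density $\hat f_K(\ell)$ of \emph{processed} larger blocks; it must then prove $\hat f_K=f_K$ by strong induction, which forces the identity~(\ref{eq:final_piece}) and its bijective proof about labelled trees with a marked oriented edge. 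You instead prove a structural characterization (your nesting lemma, and ``processed $\Leftrightarrow$ $T_w$ inclusion-maximal $\Leftrightarrow$ $p(w)$ internal''), so that $|C(T,v)|=k\,M(T,v)$ with $M(T,v)$ the number of type-$(T,v)$ vertices whose parent is internal; the complementary vertices are counted directly against \emph{all} vertices of each larger type $(T',u)\in\F_\ell$ (density $e^{-\ell}/\ell!$ from Lemma~\ref{lemma:types}) via children of the root, and your one-line bijection gives the coefficient $(\ell-k)^{\ell-k-1}\binom{\ell}{\ell-k}$, so the closed form drops out in one step with no recursion and no auxiliary identity. What each buys: the paper's round-by-round bookkeeping mirrors the algorithm exactly and needs almost no structural preparation, but pays with the algebraic unwinding $\hat f_K=f_K$; your version pays up front with the deterministic nesting/maximality analysis (which you only sketch, but whose steps — at most one huge branch when $n>2K$, Helly-type nestedness of small branch subtrees, every vertex of a small branch being non-internal — are all routine and correct) and in exchange gets the stated formula directly and makes the meaning of $f_K(k)$ transparent (density of type-$(T,v)$ vertices whose parent is internal). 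I see no gap; if written out, just make explicit the small facts you gesture at, e.g.\ that $p(w)$ non-internal forces $w\in T_{p(w)}$ and hence $|T_{p(w)}|\ge k+1$, and that distinct maximal blocks are disjoint so $|C(T,v)|=k\,M(T,v)$ exactly.
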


\begin{proof}
Since we aim for a statement that holds a.a.s., we may assume that $\T$ is any labelled tree on the vertex set $[n]$ that satisfies properties stated in Lemma~\ref{lemma:types}. The desired property will hold deterministically. To that end, we need to analyze the trimming process. 

Fix any $(T,v) \in \F_K$. During the first round (that is, round $K$), all vertices of type $(T,v)$, together with the corresponding trees that are induced by them, are moved to class $C(T,v)$. By Lemma~\ref{lemma:types}, the number of vertices of type $(T,v)$ is $(1+o(1)) ne^{-K}/K!$ and so $|C(T,v)|/n \sim K \cdot \hat{f}_K(K)$, where
$$
\hat{f}_K(K) := \frac {e^{-K}}{K!}.
$$
Now, consider any round $k$ ($1 \le k < K$) and suppose that the process is already analyzed up to that point; that is, during rounds $\ell$ ($k+1 \le \ell \le K$), for any $(T,v) \in \F_\ell$, $(1+o(1)) \hat{f}_K(\ell) n$ vertices of type $(T,v)$ were moved to class $(T,v)$ (as usual, together with the corresponding trees that are induced by them). Fix any $(T,v) \in \F_k$. By Lemma~\ref{lemma:types}, at the beginning of the trimming process there were $(1+o(1)) ne^{-k}/k!$ vertices of type $(T,v)$. Some of them were trimmed during some round $\ell$ ($k+1 \le \ell \le K$); but how many of them? In order to answer this question we need to know how many rooted trees on $\ell$ vertices contain a vertex of type $(T,v)$. We are going to use an argument similar to the one used in the proof of Lemma~\ref{lemma:types}. There are $\binom{\ell}{k}$ ways to select labels for the sub-tree on $k$ vertices of a tree on $\ell$ vertices. Without loss of generality, we may assume that the selected labels are $\{1,2,\ldots, k\}$. Now, the Pr\"ufer code for a super-tree on $\ell$ vertices has to have the first $k-1$ terms as determined by $T$ and $v$. The remaining $(\ell-2)-(k-1)=\ell-k-1$ terms yield all possible super-trees; each of these terms is from $[\ell]\setminus [k]$. Since there are $(\ell-k)$ choices for the root of a tree on $\ell$ vertices, we get that the answer to our question is $(\ell-k)^{\ell-k-1} (\ell-k) \binom{\ell}{k} = (\ell-k)^{\ell-k}\binom{\ell}{k}$. It follows that the number of vertices of type $(T,v)$ that survived till round $k$ is $(1+o(1)) \hat{f}_K(k) n$, where
\begin{equation}\label{eq:gk}
\hat{f}_K(k) := \frac {e^{-k}}{k!} - \sum_{\ell=k+1}^K (\ell-k)^{\ell-k} \binom{\ell}{\ell-k} \hat{f}_K(\ell),
\end{equation}
and so $|C(T,v)|/n \sim k \cdot \hat{f}_K(k)$. 

It remains to show that $f_K(k) = \hat{f}_K(k)$ for $1 \le k \le K$; we prove it by strong induction on $k$. Clearly, $f_K(K)=\hat{f}_K(K)$ so the base case holds. Suppose then that $f_K(\ell) = \hat{f}_K(\ell)$ for $k+1 \le \ell \le K$ and our goal is to show that $f_K(k)=\hat{f}_K(k)$. From this and~(\ref{eq:gk}) we get 
\begin{eqnarray*}
\hat{f}_K(k) &=& \frac {e^{-k}}{k!} - \sum_{\ell=k+1}^K (\ell-k)^{\ell-k} \binom{\ell}{\ell-k} f_K(\ell) \\
&=& \frac {e^{-k}}{k!} - \sum_{\ell=k+1}^K (\ell-k)^{\ell-k} \binom{\ell}{\ell-k}
\left(
    \frac {e^{-\ell}}{\ell!} - \sum_{m=\ell+1}^{K}
    (m-\ell)^{m-\ell-1} \binom{m}{m-\ell} \frac {e^{-m}}{m!}
\right).
\end{eqnarray*}
We will show that the terms in $\hat{f}_K(k)$ containing $e^{-a}$ for $k<a\leq K$ are the same as the ones in $f_K(k)$. (Clearly, it is the case for $a=k$.) To see this, note that one of these terms is present in the above equation for $\ell=a$ (see the first part inside the parenthesis) and one for each $k<\ell<a$ (see the term corresponding to $m=a$ in the second part inside the parenthesis). Collecting those terms in $\hat{f}_K(k)$ we get:
$$
-(a-k)^{a-k} \binom{a}{a-k} \frac{e^{-a}}{a!} +
\sum_{\ell=k+1}^{a-1}
(\ell-k)^{\ell-k} \binom{\ell}{\ell-k}
(a-\ell)^{a-\ell-1} \binom{a}{a-\ell} \frac {e^{-a}}{a!}.
$$
On the other hand, the only term in $f_K(k)$ containing $e^{-a}$ is $-(a-k)^{a-k-1} \binom{a}{a-k} e^{-a} / a!$. Hence, to finish the inductive step it is enough to show that
$$
(a-k-1)(a-k)^{a-k-1} \binom{a}{a-k} = 
\sum_{\ell=k+1}^{a-1}
(\ell-k)^{\ell-k} \binom{\ell}{\ell-k}
(a-\ell)^{a-\ell-1} \binom{a}{a-\ell},
$$
which, after substituting $b=a-k$ and $c=\ell-k$, we can rewrite as
\begin{equation}\label{eq:star}
(b-1)b^{b-1}=
\sum_{c=1}^{b-1} \binom{b}{c} c^c (b-c)^{b-c-1}=
\sum_{c=1}^{b-1}c \binom{b}{c}
c^{c-1} (b-c)^{b-c-1}.
\end{equation}
Then, by setting $d=b-c$ in the first step, and then using the fact that $\binom{b}{b-d}= \binom{b}{d}$ and changing $d$ to $c$ in the notation in the second step, we get
\begin{eqnarray}
(b-1)b^{b-1} &=&
\sum_{d=1}^{b-1}(b-d) \binom{b}{b-d}
(b-d)^{b-d-1} d^{d-1} \nonumber \\
&=& \sum_{c=1}^{b-1}(b-c) \binom{b}{c}
(b-c)^{b-c-1}c^{c-1}. \label{eq:starr}
\end{eqnarray}
By adding~(\ref{eq:star}) and~(\ref{eq:starr}) and dividing both sides by $b$ we get
\begin{equation}\label{eq:final_piece}
2(b-1)b^{b-2}=
\sum_{c=1}^{b-1} \binom{b}{c}
c^{c-1}(b-c)^{b-c-1}.
\end{equation}

The final ``puzzle piece'' missing is the proof of~(\ref{eq:final_piece}) for which we will use a bijective argument. The left hand side of~(\ref{eq:final_piece}) counts all labelled trees on the vertex set $[b]$ with one edge selected and \emph{oriented}. 
Now, consider the following construction. First, take any proper and non-empty subset $C \subseteq [b]$ of size $c$ ($1 \le c \le b-1$); let $D = [b] \setminus C$. Construct any labelled tree on $C$ and select one vertex $v_C \in C$. Similarly, construct any labelled tree on $D$ and select one vertex $v_D \in D$. Finally, connect $v_C$ to $v_D$ by an oriented edge from $v_C$ to $v_D$.
The described construction generates all possible labelled trees with one edge selected and oriented. Moreover, each such tree is constructed exactly once. Now observe that number of such constructions is equal to right hand side of~(\ref{eq:final_piece}). The proof is finished.
\end{proof}

\medskip

Now, we are ready to state the main result of this sub-section that yields the strongest lower bound we have.

\begin{theorem}\label{thm:lower-comp}
Fix any $K \in \nat$. 
For any $(T,v) \in \F_k$ for some $k \in [K]$, let $g(T,v)$ be the number of sub-trees of $T$ containing $v$. 
Let $f_K(k)$ be defined as in the statement of Lemma~\ref{lemma:c_tv}. 

Then, the following bound holds a.a.s. 
\begin{eqnarray}
c(n) &\ge& \left( \prod_{k=1}^K \prod_{(T,v) \in \F_k} g(T,v)^{f_K(k)} + o(1) \right)^n.\label{eq:cn}
\end{eqnarray}
\end{theorem}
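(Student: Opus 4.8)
The plan is to combine the partition of $V(\T)$ established in Lemma~\ref{lemma:c_tv} with the observation that a sub-tree of $\T$ can be built independently on each piece of the partition, exactly as in the proof of Theorem~\ref{thm:lower-trivial}. Since we want an a.a.s.\ statement, I would first fix a tree $\T$ satisfying the conclusions of Lemmas~\ref{lemma:types} and~\ref{lemma:c_tv}; all that follows is then deterministic.

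The main structural point is the following. After the trimming process, the vertex set of $\T$ is partitioned into the classes $C(T,v)$ for $(T,v) \in \F$ together with the set $R$ of internal vertices. For each class $C(T,v)$, the vertices of type $(T,v)$ in $\T$ induce $(1+o(1)) n f_K(k) / k$ pairwise vertex-disjoint copies of the rooted tree $T$ (by Lemma~\ref{lemma:c_tv}, since $|C(T,v)| \sim k f_K(k) n$ and each copy has $k$ vertices), and crucially each such copy $T'$ is attached to the rest of $\T$ by a single edge incident to the vertex of $T'$ playing the role of $v$. I would next argue that, just as in Theorem~\ref{thm:lower-trivial}, one obtains a valid sub-tree of $\T$ as follows: keep all of $R$, and for each copy $T'$ of $(T,v)$ choose \emph{any} sub-tree of $T'$ that contains the root $v$ (including $T'$ itself), where the chosen sub-trees in distinct copies are selected independently. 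The resulting vertex set is connected: each chosen sub-tree hangs off the rest through its root, and removing the non-chosen vertices of a rooted tree, keeping a root-containing subtree, leaves connectivity intact; leaves and smaller pieces of lower rounds are handled by the same recursive reasoning used to define the partition. Hence the number of sub-trees of $\T$ is at least $\prod_{k=1}^{K} \prod_{(T,v)\in\F_k} g(T,v)^{(1+o(1)) n f_K(k)/k \cdot k}$, where $g(T,v)$ counts the root-containing sub-trees of $T$; the factor of $k$ cancels because there are $|C(T,v)|/k \sim n f_K(k)$ disjoint copies of $(T,v)$, giving the exponent $f_K(k) n$ for each $(T,v)\in\F_k$.

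Putting this together yields $c(n) \ge \big( \prod_{k=1}^{K} \prod_{(T,v)\in\F_k} g(T,v)^{f_K(k)} + o(1)\big)^n$, after absorbing the $(1+o(1))$ factors in the exponents into an additive $o(1)$ at the base (using that the number of factors is bounded, as $|\F|$ depends only on the fixed constant $K$, and that each $g(T,v) \ge 1$). This is the claimed inequality~(\ref{eq:cn}).

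The only genuinely delicate step is the combinatorial claim that the described selection always produces a genuine sub-tree and that distinct selections produce distinct sub-trees — i.e.\ that we are really multiplying independent choices. I expect this to be the main obstacle, and I would handle it carefully by induction on the rounds $K, K-1, \dots, 1$: at round $\ell$, a copy of $(T,v)\in\F_\ell$ sits in $\T$ trimmed of everything removed in rounds $K,\dots,\ell+1$, and is joined to that trimmed remainder by exactly one edge at its root, so choosing a root-containing subtree of it (or discarding it entirely, which corresponds to the empty choice — but note $g(T,v)$ counts only \emph{non-empty} root-containing subtrees, so in fact we always keep the root and connectivity to the parent is automatic) keeps the whole configuration connected. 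Distinctness is immediate because the copies are vertex-disjoint and $R$ is kept in full, so the sub-tree determines, and is determined by, the tuple of choices.
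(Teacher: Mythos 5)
Your proposal is correct and takes essentially the same route as the paper's proof: fix a tree satisfying Lemmas~\ref{lemma:types} and~\ref{lemma:c_tv}, keep all of $R$, independently pick a root-containing sub-tree in each of the $(1+o(1))f_K(k)n$ disjoint pendant copies of each $(T,v)\in\F_k$, and absorb the $(1+o(1))$ factors in the exponents into an additive $o(1)$ in the base because $|\F|$ is bounded for fixed $K$. The paper's own argument is even terser (it simply asserts that taking all of $R$ together with any rooted sub-trees from the classes $C(T,v)$ yields valid, distinct sub-trees), so your additional care about connectivity of the selections, and your side remark on whether the empty sub-tree is counted in $g(T,v)$, merely make explicit what the paper leaves implicit.
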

\begin{proof}
Recall that the vertex set of $\T$ is partitioned as follows: for $(T,v) \in \F$, set $C(T,v)$ contains vertices of type $(T,v)$ that induce rooted trees $T$, together with other vertices of $T$; the internal vertices form set $R$. It follows from Lemma~\ref{lemma:c_tv} that a.a.s., for any $k \in [K]$ and any $(T,v) \in \F_k$, the number of rooted trees in $C(T,v)$ is $(1+o(1)) f_K(k) n$. 
By taking all vertices of $R$ and any rooted sub-trees from $C(T,v)$, the following lower bound for $c(n)$ holds: a.a.s.
\begin{equation*}
c(n) \ge \left( \prod_{k=1}^K \prod_{(T,v) \in \F_k} g(T,v)^{f_K(k)+o(1)} \right)^n = \left( \prod_{k=1}^K \prod_{(T,v) \in \F_k} g(T,v)^{f_K(k)} + o(1) \right)^n,
\end{equation*}
since the number of terms in this product is bounded. 
\end{proof}

Function $f_K(k)$ can be easily calculated (numerically) even for relatively large values of $K$ and $k$. Unfortunately, there is no closed formula for $g(T,v)$, the number of rooted sub-trees of $T$ (recall that the empty tree is included). On the other hand, $g(T,v)$ can be easily computed with computer support using the following simple, recursive algorithm. Let $N(v)$ be the set of neighbours of $v$. For any $w \in N(v)$, $T-vw$ (that is, forest obtained after removing edge $vw$) consists of two sub-trees; let $S(T,v,w)$ be the sub-tree containing $w$. Then $g(T,v)$ can be computed as follows: if $T$ is $K_1$ (isolated vertex), then $g(T,v)=2$; otherwise,
\begin{equation}
g(T,v)=1+\prod_{w\in N(v)} g(S(T, v, w), w).\label{eq:rec}
\end{equation}
Actual computations of $c(n)$ can be made efficient using the following two observations:
\begin{enumerate}
    \item we do not have to explicitly generate all trees $(T,v)$ in $\F_k$; it is enough to count the number of rooted trees of size $k$ that have a given value of $g(T,v)$---since this is enough to compute~\eqref{eq:cn};
    \item if we start from $k=1$ up to $k=K$, then we can derive counts of trees from $\F_k$ with unique values of $g(T,v)$ using counts of numbers of trees from $\F_{k-s}$, where $s\in[k-1]$, with unique values of $g(T,v)$---as in~\eqref{eq:rec}, the right hand side considers trees of size one less than the left hand side.
\end{enumerate}
The exact procedure is given in Algorithm \ref{alg:sub}, where
$x(k,g)= \left|\left\{(T,v)\in\F_k:g(T,v)=g\right\}\right|$. Using $x(k,g)$, one can rewrite~\eqref{eq:cn} as follows:
\[
c(n) \ge \left( \prod_{k=1}^K \left(\prod_{g\in\nat} g^{f_K(k)}\right)^{x(k,g)} + o(1) \right)^n
= \left( \prod_{k=1}^K \left(\prod_{g\in\nat} g^{x(k,g)}\right)^{f_K(k)} + o(1) \right)^n,
\]

\begin{algorithm}
    \caption{Algorithm for calculation of $x(k,g)$. \label{alg:sub}}
    \begin{algorithmic}
        \State $\forall k,g\in\nat: x(k, g) \gets 0$
        \State $x(1, 2) \gets 1$
        \For{$k\in \{2,3,\dots,K\}$}
            \ForAll{$a_1, a_2, \dots, a_m \in \nat$ such that $\sum_{i=1}^ma_i=k-1$ and $a_i\le a_{i+1}$}
                \State let $n_j$, $j\in[p]$, be the length of the $j$-th constant subsequence of the $a_i$ sequence
                \ForAll{$x(a_i, g_i)$ over all $i\in[m]$ and $g_i\in\nat$}
                    \State $x(k, 1+\prod_{i=1}^{m}g_i) \gets x(k, 1+\prod_{i=1}^{m}g_i) +
                    \frac{k!}{\prod_{j=1}^pn_j!}
                    \prod_{i=1}^{m}\frac{x(a_i,g_i)}{a_i!}$
                \EndFor
            \EndFor
        \EndFor
    \end{algorithmic}
\end{algorithm}

The obtained lower bounds for $K=1, 2, \ldots, 30$ are presented in Table~\ref{tab:tab1} (column \emph{lower}, the following columns are explained in the following sections). Clearly, the strongest bound is yielded by $K=30$ and is the best lower bound we have.

\begin{theorem}\label{thm:lower-computer}
A.a.s.\ $c(n) \ge 1.41805^n$.
\end{theorem}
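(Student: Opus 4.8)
The idea is to apply Theorem~\ref{thm:lower-comp} with a single sufficiently large value of $K$, take $K=30$, and then certify by a rigorous numerical computation that the constant on the right-hand side of~\eqref{eq:cn} exceeds $1.41805$. So the whole task reduces to evaluating
$$
B_K := \prod_{k=1}^{K} \prod_{(T,v)\in\F_k} g(T,v)^{f_K(k)}
$$
for $K=30$ with enough precision to decide the inequality $B_K>1.41805$.

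First I would compute the coefficients $f_K(1),\dots,f_K(K)$ directly from the closed form in Lemma~\ref{lemma:c_tv}; this is a finite alternating sum of elementary quantities, hence evaluable to any accuracy. One checks in passing that $f_K(k)\ge 0$ for every $k$ (up to the $o(n)$ error, $f_K(k)\,n$ is a count of vertices), so every exponent in $B_K$ is nonnegative while every base $g(T,v)\ge 2$ is positive, and no sign subtleties arise. Next I would compute, for each $k\in[K]$, the distribution $x(k,g)=\bigl|\{(T,v)\in\F_k:g(T,v)=g\}\bigr|$ via Algorithm~\ref{alg:sub}. This rests on the observation that a member of $\F_k$ is a root together with an unordered collection of $m\ge 0$ rooted trees of sizes $a_1\le\cdots\le a_m$ with $\sum_i a_i=k-1$, whose $g$-value by~\eqref{eq:rec} equals $1+\prod_i g_i$ where $g_i$ is the $g$-value of the $i$-th attached tree; the weight $\tfrac{k!}{\prod_j n_j!}\prod_i\tfrac{x(a_i,g_i)}{a_i!}$ appearing in the algorithm is precisely the number of labelled rooted trees of that shape with those attached $g$-values, with the factor $\prod_j n_j!$ (the $n_j$ being the lengths of the constant runs of $(a_i)$) correcting for the interchangeability of equal-size blocks. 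Running $k$ from $1$ to $K$, starting from $x(1,2)=1$, produces all the counts.

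Finally I would feed the numbers into the equivalent form
$$
c(n)\ge\left(\prod_{k=1}^{K}\Bigl(\prod_{g\in\nat} g^{x(k,g)}\Bigr)^{f_K(k)}+o(1)\right)^{n},
$$
i.e.\ compute $\log B_K=\sum_{k=1}^{K}f_K(k)\sum_{g}x(k,g)\log g$ and verify that it exceeds $\log 1.41805$. For $K=30$ this is the case (the values of $B_K$ for all $K\le 30$ are listed in the column \emph{lower} of Table~\ref{tab:tab1}), and Theorem~\ref{thm:lower-comp} then gives the claim.

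The step I expect to be the real obstacle is not the mathematics but making this evaluation genuinely rigorous. Since $|\F_{30}|=30^{29}$, the multiplicities $x(k,g)$ are enormous and spread over a wide range of values $g$, so naive floating-point accumulation could in principle corrupt the decisive digits; one therefore wants the recursion run in exact big-integer (or rational) arithmetic, with the final inequality $\sum_k f_K(k)\sum_g x(k,g)\log g>\log 1.41805$ checked under controlled round-off (for instance via interval arithmetic). The combinatorial bookkeeping---enumerating the partitions of $k-1$, applying the run-length correction $\prod_j n_j!$, and convolving the per-size $g$-distributions---is routine but must be implemented without error; with these precautions the stated bound follows.
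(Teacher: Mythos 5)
Your plan is exactly the paper's own route: invoke Theorem~\ref{thm:lower-comp} with $K=30$, compute the counts $x(k,g)$ via Algorithm~\ref{alg:sub} and the coefficients $f_K(k)$ from Lemma~\ref{lemma:c_tv}, and certify numerically (with controlled precision, as the paper does with a $1{,}024$-bit mantissa) that the resulting constant exceeds $1.41805$, which is the value reported in the \emph{lower} column of Table~\ref{tab:tab1}. The proposal is correct and essentially identical to the paper's argument.
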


\subsection{Upper bound: trivial approach}

Recall that in the proof of Theorem~\ref{thm:lower-trivial} we partition the vertex set of $\T$ into a family of classes $C_k$ ($k\in\{2, 3, \ldots, \gamma\}$; $C_k$ contains $X_k$ stars and so it contains $X_k \cdot k$ vertices), leaves $L$ that are not part of any earlier class, and $R$ that contains the remaining vertices of $\T$. The size of $L$ is already estimated in Theorem~\ref{thm:lower-trivial}. The number of vertices that belong to some class $C_k$ is a.a.s.
$$
\Big| \bigcup_{k=2}^{\gamma} C_k \Big| ~~=~~ \sum_{k=2}^{\gamma} X_k \cdot k ~~\sim~~ \sum_{k=2}^{\gamma} \frac {n e^{-k}}{(k-1)!} \ k ~~=~~ \beta_C\ n,
$$
where $\beta_C=\beta_C(\gamma)$ is the constant that can be made arbitrarily close to 
$$
\hat{\beta}_C ~~:=~~ \sum_{k\ge 2} \frac {e^{-k}}{(k-1)!} \ k ~~=~~ e^{1/e-1} + e^{1/e-2} - e^{-1} ~~\approx~~ 0.3591 \ n
$$
by taking $\gamma$ large enough. 
Finally, a.a.s.
$$
|R| ~~=~~ n - \Big| \bigcup C_k \Big| - |L| ~~\sim~~ \left( 1 - \beta_C - \beta_L \right) n ~~=~~ \beta_R \ n,
$$
where $\beta_R=\beta_R(\gamma)=1-\beta_C-\beta_L$ is the constant tending to 
$$
\hat{\beta}_R ~~:=~~ 1 - \hat{\beta}_C - \hat{\beta}_L ~~=~~ 1 - e^{1/e-1} ~~\approx~~ 0.4685 \ n
$$
as $\gamma \to \infty$. 

To get an upper bound for $c(n)$, we select any subset of $R \cup L$ and any rooted sub-trees from classes $C_k$. Clearly, each sub-tree of $\T$ is achieved but not all selected sets induce a connected graph. (In fact, almost all of them do not!) So we are clearly over-counting but the following can serve as the upper bound that holds a.a.s.:
\begin{eqnarray*}
c(n) &\le& 2^{ |L|+|R| } \prod_{k=2}^{\gamma} \left( 2^{k-1} + 1 \right)^{X_k} ~~=~~ \left( 2^{\beta_L+\beta_R+o(1)} \prod_{k=2}^{\gamma} (2^{k-1}+1)^{e^{-k}/(k-1)!+o(1)} \right)^n \\
&=& \left( 2^{\beta_L+\beta_R} \prod_{k=2}^{\gamma} (2^{k-1}+1)^{e^{-k}/(k-1)!} +o(1) \right)^n = \Big( \alpha + o(1) \Big)^n,
\end{eqnarray*}
where $\alpha=\alpha(\gamma)$ is the constant that can be made arbitrarily close to 
\begin{eqnarray*}
\hat{\alpha} &:=&  2^{\hat{\beta}_L+\hat{\beta}_R} \prod_{k\ge2} (2^{k-1}+1)^{e^{-k}/(k-1)!} \\
&=& 2^{1+e^{-1} -e^{1/e-1}-e^{1/e-2}} \prod_{k\ge2} (2^{k-1}+1)^{e^{-k}/(k-1)!} < 1.89756
\end{eqnarray*}
by taking $\gamma$ large enough. It follows that a.a.s.\ $c(n) \le 1.89756^n$.

\bigskip

The same trivial argument can be used to adjust Theorem~\ref{thm:lower-comp}: the ratio between the upper and the lower bound is $2^{|R|}$, where $R$ is the set of internal vertices. The following straightforward corollary of Lemma~\ref{lemma:types} estimates the size of $R$. It shows that the fraction of vertices that are internal is tending to zero as $K \to \infty$. This is, of course, a desired property as it implies that the gap between the upper and the lower bound for $c(n)$ can be made arbitrarily small by considering large values of $K$. Unfortunately, the rate of convergence is not so fast.

\begin{corollary}\label{cor:R}
For any $K \in \nat$, a.a.s. 
$$
\frac {|R|}{n} \sim h(K) := 1 - \sum_{k=1}^K \frac {(k/e)^{k}}{k\cdot k!} = \Theta \left( \frac {1}{\sqrt{K}} \right),
$$
where an asymptotic is with respect to $K$. 
\end{corollary}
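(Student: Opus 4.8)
The plan is to read off $|R|/n$ directly from the partition $\{C(T,v):(T,v)\in\F\}\cup\{R\}$ of $[n]$, simplify the resulting finite sum using the combinatorial identity already proved inside the proof of Lemma~\ref{lemma:c_tv}, and finally estimate the tail of a classical series. First I would use $|R| = n - \sum_{(T,v)\in\F}|C(T,v)|$. By Lemma~\ref{lemma:c_tv}, a.a.s.\ $|C(T,v)|/n \sim k\,f_K(k)$ holds simultaneously for all $(T,v)\in\F_k$ and all $k\in[K]$. Since $|\F_k| = k^{k-1}$ and $\F$ is a family whose size depends only on $K$ (so that a bounded sum of $(1+o(1))$-terms is again a $(1+o(1))$-term), we obtain a.a.s.
\[
\frac{|R|}{n} \sim 1 - \sum_{k=1}^{K} k^{k-1}\cdot k\, f_K(k) = 1 - \sum_{k=1}^{K} k^{k}\, f_K(k).
\]

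The next step is to show $\sum_{k=1}^K k^k f_K(k) = \sum_{k=1}^K \frac{(k/e)^k}{k\cdot k!}$. Substituting the definition of $f_K(k)$, interchanging the order of the resulting double sum, and using $\binom{\ell}{\ell-k}=\binom{\ell}{k}$, the correction terms collect into $\sum_{\ell=2}^{K}\frac{e^{-\ell}}{\ell!}\sum_{k=1}^{\ell-1}\binom{\ell}{k}k^k(\ell-k)^{\ell-k-1}$. But the inner sum is precisely the left-hand side of~\eqref{eq:star} with $b=\ell$, hence equals $(\ell-1)\ell^{\ell-1}$; so $\sum_{k=1}^K k^k f_K(k) = \sum_{k=1}^{K}\frac{k^k e^{-k}}{k!} - \sum_{\ell=2}^{K}\frac{(\ell-1)\ell^{\ell-1}e^{-\ell}}{\ell!}$. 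Pairing the two sums term by term and using $\frac{k^ke^{-k}}{k!} - \frac{(k-1)k^{k-1}e^{-k}}{k!} = \frac{k^{k-1}e^{-k}}{k!}$ makes the expression telescope to $\sum_{k=1}^K \frac{k^{k-1}e^{-k}}{k!} = \sum_{k=1}^K \frac{(k/e)^k}{k\cdot k!}$, which gives $|R|/n\sim h(K)$.

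For the order-of-magnitude claim I would invoke the classical tree-function identity $\sum_{k\ge 1}\frac{k^{k-1}}{k!}e^{-k}=1$ (equivalently $T(1/e)=1$, where $T(x)=\sum_{k\ge1}\frac{k^{k-1}}{k!}x^k$ is the exponential generating function of rooted labelled trees and $1/e$ is its radius of convergence). Hence $h(K)=\sum_{k>K}\frac{k^{k-1}}{k!}e^{-k}$ is the tail of a convergent positive series. By Stirling, $k! = (1+o(1))\sqrt{2\pi k}\,(k/e)^k$, so $\frac{k^{k-1}}{k!}e^{-k} = (1+o(1))\frac{1}{\sqrt{2\pi}\,k^{3/2}}$, and comparing the tail with $\int_{K}^{\infty}x^{-3/2}\,dx = 2K^{-1/2}$ gives $h(K)\sim\sqrt{2/(\pi K)} = \Theta\!\left(K^{-1/2}\right)$.

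The genuinely routine parts are the passage to a bounded union of a.a.s.\ events, the telescoping, and the Stirling estimate. The only step needing a little care is noticing that the algebraic reduction of $\sum_k k^k f_K(k)$ lands exactly on the identity~\eqref{eq:star} already established (so no new bijection is required), together with keeping the counting factor $|\F_k|=k^{k-1}$ straight; I do not expect a real obstacle beyond this bookkeeping.
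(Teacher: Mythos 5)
Your proof is correct, but it reaches the identity $|R|/n \sim h(K)$ by a longer route than the paper does. The paper's own argument is essentially a one-liner: by construction $R$ is exactly the set of vertices that are not of type $(T,v)$ for any $(T,v)\in\F$, so Lemma~\ref{lemma:types} together with $|\F_k|=k^{k-1}$ immediately gives that the fraction of typed vertices is $\sum_{k=1}^K k^{k-1}e^{-k}/k!$, hence $|R|/n\sim h(K)$, with no reference to the classes $C(T,v)$ or to $f_K$. You instead write $|R| = n-\sum_{(T,v)\in\F}|C(T,v)|$, apply Lemma~\ref{lemma:c_tv}, and then must verify the algebraic identity $\sum_{k=1}^K k^k f_K(k)=\sum_{k=1}^K k^{k-1}e^{-k}/k!$, which you do correctly by interchanging the order of summation and invoking~(\ref{eq:star}) with $b=\ell$; since that identity is established inside the proof of Lemma~\ref{lemma:c_tv}, nothing new is needed, but the detour is avoidable (as a by-product, your computation is a pleasant consistency check that the trimming process conserves mass, i.e.\ that the total size of the classes matches the number of typed vertices). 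For the second claim the two arguments differ only in how they justify $\sum_{k\ge 1}(k/e)^k/(k\cdot k!)=1$: the paper interprets the summands as the distribution of the total progeny of a critical Poisson Galton--Watson process and uses extinction with probability one (citing Tanner), while you invoke the equivalent analytic fact $T(1/e)=1$ for the tree function; both routes then finish with Stirling's formula and comparison of the tail with $\int_K^\infty x^{-3/2}\,dx$, and your version even records the sharper asymptotics $h(K)\sim\sqrt{2/(\pi K)}$, which refines the stated $\Theta(K^{-1/2})$.
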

\begin{proof}
The number of internal vertices (that is, vertices that are not of type $(T,v)$ for any $(T,v) \in \F$) can be estimated using Lemma~\ref{lemma:types}. Since $|\F_k| = k^{k-1}$, we get that
$$
\frac {|R|}{n} ~~\sim~~ 1 - \sum_{k=1}^K |\F_k| \frac {e^{-k}}{k!} ~~=~~ 1 - \sum_{k=1}^K \frac {(k/e)^{k}}{k\cdot k!} ~~=~~ h(K).   
$$

To see the second part, consider the branching process in which every individual produces individuals that is an independent random variable with Poisson distribution with expectation 1. The process extincts with precisely $k$ individuals (in total) with probability $\frac {(k/e)^{k}}{k\cdot k!}$ (see, for example,~\cite{Tanner61}). Hence, $h(K)$ is the probability that the total number of individuals is more than $K$. Since the process extincts with probability 1, $h(K) \to 0^+$ as $K \to \infty$; or, alternatively, $\sum_{k \ge 1} \frac {(k/e)^{k}}{k\cdot k!} = 1$. To see the rate of convergence we apply Stirling's formula $k! \sim \sqrt{2\pi k} (k/e)^k$ to get
$$
h(K) ~~=~~ \sum_{k>K} \frac {(k/e)^{k}}{k\cdot k!} ~~=~~ \Theta \left( \sum_{k>K} k^{-3/2} \right) ~~=~~ \Theta \left( \frac {1}{\sqrt{K}} \right).
$$
The proof is finished.
\end{proof}

We get the following counterpart of Theorem~\ref{thm:lower-comp}. 

\begin{observation}
Fix any $K \in \nat$. 
For any $(T,v) \in \F_k$ for some $k \in [K]$, let $g(T,v)$ be the number of sub-trees of $T$ containing $v$. 
Let $f_K(k)$ and $h(K)$ be defined as in the statements of Lemma~\ref{lemma:c_tv} and Corollary~\ref{cor:R}, respectively. 

Then, the following bound holds a.a.s. 
\begin{eqnarray*}
c(n) &\le& \left( 2^{h(K)} \prod_{k=1}^K \prod_{(T,v) \in \F_k} g(T,v)^{f_K(k)} + o(1) \right)^n.
\end{eqnarray*}
\end{observation}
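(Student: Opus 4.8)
The plan is to rerun the construction behind Theorem~\ref{thm:lower-comp}, but to \emph{over}-count instead of under-count. In the lower bound we were forced to keep \emph{all} of $R$; for an upper bound we instead allow each internal vertex to be in the subtree or not, which inflates the count by a factor of at most $2^{|R|}$, and $|R|$ is exactly what Corollary~\ref{cor:R} controls. As the text already remarks, the ratio between the upper and the lower estimate is precisely $2^{|R|}$, so there is nothing new to set up.

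Since the target is an a.a.s.\ statement, I would fix once and for all a labelled tree $\T$ on $[n]$ for which the conclusions of Lemma~\ref{lemma:types}, Lemma~\ref{lemma:c_tv} and Corollary~\ref{cor:R} hold simultaneously (this happens a.a.s.); the bound is then deterministic. Recall the partition $V(\T)=R\cup\bigcup_{(T,v)\in\F}C(T,v)$: for $(T,v)\in\F_k$, the class $C(T,v)$ is a disjoint union of \emph{copies}, one per vertex of type $(T,v)$; each copy spans $k$ vertices, induces a rooted tree isomorphic to $(T,v)$, and is joined to the rest of $\T$ by a single edge incident to the image $v_B$ of the root. By Lemma~\ref{lemma:c_tv} the number of copies inside $C(T,v)$, namely $|C(T,v)|/k$, is $(f_K(k)+o(1))\,n$, and by Corollary~\ref{cor:R} we have $|R|=(h(K)+o(1))\,n$.

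Now the over-counting. Call a subtree $S$ of $\T$ \emph{local} if $V(S)$ lies inside a single copy, and \emph{global} otherwise. Since each copy has at most $K$ vertices and, being pairwise disjoint, there are at most $n$ copies in total, there are at most $n\,2^{K}=O(n)$ local subtrees. For a \emph{global} $S$ and any copy $B\cong(T,v)$: the only edge joining $B$ to $\T\setminus B$ is incident to $v_B$, so any path inside $S$ that leaves $B$ must use that edge, hence visit $v_B$, and it can do so only once; therefore $S\cap B$ is connected, and since $S\not\subseteq B$ it is either empty or a subtree of $B$ containing $v_B$, i.e.\ one of exactly $g(T,v)$ possibilities (the empty tree together with the subtrees of $T$ through $v$). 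Because $R$ and the copies partition $V(\T)$, a global $S$ is determined by the data $\bigl(S\cap R,\ (S\cap B)_B\bigr)$, so the number of global subtrees is at most $2^{|R|}\prod_{k=1}^{K}\prod_{(T,v)\in\F_k}g(T,v)^{\#\{\text{copies of }(T,v)\}}$. Substituting the asymptotics for $|R|$ and for the numbers of copies, and noting that the number of distinct types $\sum_{k=1}^{K}k^{k-1}$ is $O(1)$ for fixed $K$ (so all the stray factors $g(T,v)^{o(n)}$ and $2^{o(n)}$ merge into a single $(1+o(1))^{n}$), we obtain $c(n)\le B_K^{\,n}(1+o(1))^n+O(n)$ where
\[
B_K\;:=\;2^{h(K)}\prod_{k=1}^{K}\prod_{(T,v)\in\F_k}g(T,v)^{f_K(k)};
\]
since $B_K>1$ (every $g(T,v)\ge 2$; alternatively $B_K$ is at least the lower bound of Theorem~\ref{thm:lower-comp}), the additive $O(n)$ term is negligible and $c(n)\le (B_K+o(1))^n$, which is the claim.

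I do not expect a genuine obstacle: all the quantitative content sits in Lemma~\ref{lemma:c_tv} and Corollary~\ref{cor:R}, which are already proved. The only point that deserves a moment's care is the local/global split, because $g(T,v)$ counts only the subtrees of $T$ through its root (plus the empty tree): a subtree of $\T$ living entirely inside one copy but avoiding that copy's root is \emph{not} captured by the main product and must be discarded separately --- which is harmless, as there are only $O(n)$ of those while $B_K^{\,n}$ grows exponentially.
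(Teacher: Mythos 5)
Your argument is correct and is essentially the paper's own: the Observation is obtained exactly by allowing an arbitrary subset of the internal vertices $R$ (giving the factor $2^{h(K)n(1+o(1))}$ via Corollary~\ref{cor:R}) together with any rooted sub-tree in each copy, i.e.\ the same over-counting that the paper describes as ``the ratio between the upper and the lower bound is $2^{|R|}$.'' Your local/global split merely makes explicit the $O(n)$ sub-trees confined to a single copy and missing its root, a point the paper glosses over and which, as you note, is negligible.
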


The numerical values of the upper bounds for $c(n)$ and for $|R|/n$ ($K \in \{1, 2, \ldots, 30\}$) are presented in Table~\ref{tab:tab1} (see column \emph{upper 1} and column $|R|/n$, respectively). For $K=30$ we get that a.a.s.\ $c(n) \le 1.56727^n$. As already mentioned, unfortunately, the rate of convergence is not so fast. Since the computational complexity of the problem makes $K$ to be not so large (at most $30$), the number of internal vertices is substantial ($|R|\approx 0.14434 n$ for $K=30$) and so more sophisticated arguments will be needed. 

\subsection{Upper Bound: computer assisted argument}\label{sec:upper_bound}

We continue using the notation and definitions used in Section~\ref{sec:lower_bound_2}. Recall that the vertex set of $\T$ is partitioned there as follows: for $(T,v) \in \F$, set $C(T,v)$ contains vertices of type $(T,v)$ that induce rooted trees $T$, together with other vertices of $T$; the internal vertices form set $R$. However, this time we additionally partition $R$ into two sets: $R_L$ contains vertices of type $(T,v) \in \F_k$ for some $K < k \le \hat{K}$ (\emph{light} internal vertices) and $R_H = R \setminus R_L$ (\emph{heavy} internal vertices).

\medskip

Here is the strongest upper bound we have, in its general form.

\begin{theorem}\label{thm:upper-comp}
Fix any $K, \hat{K} \in \nat$ such that $K < \hat{K}$. 
For any $(T,v) \in \F_k$ for some $k \in [K]$, let $g(T,v)$ be the number of sub-trees of $T$ containing $v$. 
Let $f_K(k)$ and $h(K)$ be defined as in the statements of Lemma~\ref{lemma:c_tv} and Corollary~\ref{cor:R}, respectively. 

Then, the following bound holds a.a.s. 
$$
c(n) \le \left( \xi_1 \xi_2 \xi_3 \xi_4 + o(1) \right)^n,
$$
where
\begin{eqnarray*}
\xi_1 &=& \left( \frac {\hat{K}+1}{\hat{K}} \right)^{h(\hat{K})} \\
\xi_2 &=& \prod_{k=K+1}^{\hat{K}} \left( \frac{k+1}{k} \right)^{k(k-1)^{k-2} e^{-k}/k!} \\
\xi_3 &=& \prod_{k=K+1}^{\hat{K}} \left( \frac{2k-1}{2k-2} \right)^{ ( k^{k-1}-k(k-1)^{k-2} ) e^{-k}/k!} \\
\xi_4 &=& \prod_{k=1}^K \prod_{(T,v) \in \F_k} g(T,v)^{f_K(k)}.
\end{eqnarray*}
\end{theorem}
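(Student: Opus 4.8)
The plan is to refine the pendant-region decomposition behind the ``trivial'' upper bound: instead of paying a factor $2$ for each internal vertex, pay the much smaller weights $\tfrac{\hat{K}+1}{\hat{K}}$, $\tfrac{k+1}{k}$, $\tfrac{2k-1}{2k-2}$ read off from $\xi_1,\xi_2,\xi_3$, and verify that these weights suffice by a multiplicative induction over the core.

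\emph{Reduction and choice of root.} Using Lemma~\ref{lemma:types}, Lemma~\ref{lemma:c_tv} and Corollary~\ref{cor:R} (the last two also with $\hat{K}$ in place of $K$), we may fix a labelled tree $\T$ realising all the relevant asymptotics and argue deterministically: the pendant regions $C(T,v)$ of each shape $(T,v)\in\F_k$ number $(1+o(1))f_K(k)n$; $|R_H|=(1+o(1))h(\hat{K})n$; and for $K<k\le\hat{K}$, since exactly $k(k-1)^{k-2}$ of the $k^{k-1}$ rooted trees on $[k]$ are rooted at a leaf (a tree on $[k]$ in which a fixed vertex is a leaf is encoded by a Pr\"ufer word omitting that symbol), the number of $v\in R_L$ whose unique side of size $\le\hat{K}$ has size $k$ with $v$ a leaf of it is $(1+o(1))k(k-1)^{k-2}e^{-k}/k!\cdot n$, and the complementary count is $(1+o(1))(k^{k-1}-k(k-1)^{k-2})e^{-k}/k!\cdot n$. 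Root $\T$ at a centroid $\rho$; then for $v\ne\rho$ the largest branch of $v$ is the one towards $\rho$, of size $n-|\T_v^{\downarrow}|$, where $\T_v^{\downarrow}$ is the subtree on the descendants of $v$ (including $v$). It follows that $\rho\in R_H$; that $R$ induces a subtree containing $\rho$ whose vertices each have a parent in $R$ (so a pendant region always hangs \emph{below} its attaching core vertex, with its root as a child); that $v\in R\iff|\T_v^{\downarrow}|>K$ and $v\in R_H\iff|\T_v^{\downarrow}|>\hat{K}$; and that each $v\in R$ has degree $\ge2$ and hence a child.

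\emph{Counting by topmost vertex.} Let $N(S,s)$ be the number of subtrees of a rooted tree $(S,s)$ that contain $s$; then $N(S,s)\ge|S|$, and for a pendant region $P$ with root $\pi_P$ one has $N(P,\pi_P)+1=g(P,\pi_P)$. Every non-empty subtree of $\T$ has a unique vertex closest to $\rho$, so $c(n)\le 1+\sum_{v}N(\T_v^{\downarrow},v)$; the vertices $v$ inside pendant regions have $\T_v^{\downarrow}$ contained in that region and hence contribute $\le 2^K$ each, $O(n)$ altogether. For $v\in R$ set
\[
\Phi(v)\ :=\ \Big(\prod_{P\subseteq\T_v^{\downarrow}}g(P,\pi_P)\Big)\Big(\prod_{u\in R\cap\T_v^{\downarrow}}\phi(u)\Big),
\]
the first product over pendant regions contained in $\T_v^{\downarrow}$, where $\phi(u)=\tfrac{\hat{K}+1}{\hat{K}}$ for $u\in R_H$, $\phi(u)=\tfrac{k+1}{k}$ for $u\in R_L$ with size-$k$ side of which $u$ is a leaf, and $\phi(u)=\tfrac{2k-1}{2k-2}$ for $u\in R_L$ with size-$k$ side of which $u$ is not a leaf. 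All factors exceed $1$, so $\Phi$ is non-decreasing along root-paths and $\max_{v\in R}\Phi(v)=\Phi(\rho)=\big(\prod_{\text{all pendant regions}}g\big)\big(\prod_{u\in R}\phi(u)\big)$, which by the counts above is $(\xi_4+o(1))^n\cdot(\xi_1\xi_2\xi_3+o(1))^n$. So everything reduces to the key estimate $N(\T_v^{\downarrow},v)\le\Phi(v)-1$ for all $v\in R$: granting it, $c(n)\le 1+O(n)+|R|\,\Phi(\rho)=(\xi_1\xi_2\xi_3\xi_4+o(1))^n$.

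\emph{The key estimate and the main obstacle.} One proves $N(\T_v^{\downarrow},v)+1\le\Phi(v)$ by induction on $|\T_v^{\downarrow}|$. If $v$ has pendant-region children with regions $P_1,\dots$ and core children $v_1',\dots$, then $N(\T_v^{\downarrow},v)=\big(\prod_j g(P_j,\pi_j)\big)\prod_i(1+N(\T_{v_i'}^{\downarrow},v_i'))\le\big(\prod_j g(P_j,\pi_j)\big)\prod_i\Phi(v_i')$ by the hypothesis, while $\Phi(v)=\big(\prod_j g(P_j,\pi_j)\big)\phi(v)\prod_i\Phi(v_i')$; so it is enough that $\big(\prod_j g(P_j,\pi_j)\big)\prod_i\Phi(v_i')\ge1/(\phi(v)-1)$, the right side being $\hat{K}$, $k$, or $2k-2$ in the three cases. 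Since $g(P_j,\pi_j)\ge|P_j|+1$ and (by the hypothesis at $v_i'$ and $N\ge|\cdot|$) $\Phi(v_i')\ge|\T_{v_i'}^{\downarrow}|+1$ are integers $\ge2$, for which $\prod(\cdot)\ge\sum(\cdot)$, and regrouping ($\prod(1+x)\ge1+\sum x$) improves this to $\ge 2\big(\sum_j|P_j|+\sum_i|\T_{v_i'}^{\downarrow}|\big)$ when $v$ has $\ge2$ children. As $\sum_j|P_j|+\sum_i|\T_{v_i'}^{\downarrow}|=|\T_v^{\downarrow}|-1$, this gives $\ge\hat{K}$ when $v\in R_H$; when $v\in R_L$ has a size-$k$ side with $v$ a leaf it has exactly one child whose subtree has $k-1$ vertices, so the left side is $\ge k$; and when $v$ is not a leaf of its size-$k$ side it has $\ge2$ children with subtree-sizes summing to $k-1$, so the left side is $\ge2(k-1)$. (The base case, no core children, is the same inequality with an empty product.) The genuinely delicate work is all in the earlier steps: extracting from the cited lemmas the structural facts about the two-stage $K$-then-$\hat{K}$ peeling (connectivity of $R$, the enumerations $|\F_k|=k^{k-1}$ and $k(k-1)^{k-2}$, a.a.s.\ concentration of every vertex count used), and checking that $\prod_{u\in R}\phi(u)$ is \emph{exactly} $(\xi_1\xi_2\xi_3+o(1))^n$; the constants $\tfrac{\hat{K}+1}{\hat{K}},\tfrac{k+1}{k},\tfrac{2k-1}{2k-2}$ are the largest weights for which the telescoping in the key estimate survives (the worst cases being path- and $Y$-shaped regions), and no bound on $\Delta(\T)$ is needed because $\prod(\cdot)\ge\sum(\cdot)$ tolerates arbitrarily many children.
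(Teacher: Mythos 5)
Your proof is correct and follows essentially the same route as the paper: exact $g$-values for the size-$\le K$ pendant regions, per-internal-vertex multiplicative penalties $\tfrac{k+1}{k}$, $\tfrac{2k-1}{2k-2}$, $\tfrac{\hat{K}+1}{\hat{K}}$ justified by the counts $k(k-1)^{k-2}$ and $k^{k-1}$, and the concentration lemmas to convert the product into $(\xi_1\xi_2\xi_3\xi_4+o(1))^n$. The only difference is bookkeeping: the paper expands $g(\T,r)$ for a fixed vertex $r$ via the identity $g=\tfrac{g}{g-1}\prod g$ and uses $c(n)\le n\max_r g(\T,r)$, whereas you root at a centroid, count subtrees by topmost vertex, and prove the same per-vertex bounds by induction, which in fact handles more carefully the identification of each vertex's hanging subtree with its type tree.
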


\begin{proof}
Let us fix any vertex $r \in [n]$.  Our goal is to use~(\ref{eq:rec}) to estimate $g(\T,r)$, the number of sub-trees of $\T$ containing $r$. As mentioned earlier, $[n]$ is partitioned into sets $C(T,v)$ containing trees rooted at vertices of type $(T,v)$, $R_L$ and $R_H$ consisting of light and, respectively, heavy internal vertices. It follows from Lemma~\ref{lemma:c_tv} that a.a.s., for any $k \in [K]$ and any $(T,v) \in \F_k$, the number of rooted trees in $C(T,v)$ is $(1+o(1)) f_K(k) n$. From Corollary~\ref{cor:R} we get that a.a.s.\ the number of heavy internal vertices is $(1+o(1)) h(\hat{K}) n$. Finally, Lemma~\ref{lemma:types} implies that a.a.s.\ for any $(T,v) \in \F_k$ for some $k \in [\hat{K}]$, the number of vertices of type $(T,v)$ is $(1+o(1)) n e^{-k}/k!$. 

Recall that for any $w \in N(v)$, $T-vw$ consists of two sub-trees; $S(T,v,w)$ is the sub-tree containing $w$. Then, 
$$
g(\T,r) = 1+\prod_{w\in N(r)} g(S(\T, r, w), w) 
$$
and $g(T,v)$ can be (recursively) computed as follows: if $(T,v) \in \F = \bigcup_{k \in [K]} \F_k$, then $g(T,v)$ is already known (that is, computed by computer); otherwise,
\begin{equation*}
g(T,v)=1+\prod_{w\in N(v)} g(S(T, v, w), w) = m(T,v) \cdot \prod_{w\in N(v)} g(S(T, v, w), w),
\end{equation*}
where 
$$
m(T,v) = \frac {1+\prod_{w\in N(v)} g(S(T, v, w), w) } {\prod_{w\in N(v)} g(S(T, v, w), w) } = \frac {g(T,v)}{g(T,v)-1}.
$$

Clearly, for any $(T,v) \in \F_k$ we have the following trivial upper bound: $m(T,v) \le (k+1)/k$; this bound is sharp as $g(T,v)=(k+1)/k$ for a rooted path on $k$ vertices. We will use this bound for all pairs $(T,v)$ where $v$ is a leaf in $T$. The number of pairs $(T,v)$ in $\F_k$ where $v$ is a leaf of $T$ is $k (k-1)^{k-2}$ (there are $k$ choices for the label of $v$, and $(k-1)^{k-2}$ rooted trees in $\F_{k-1}$ that can be attached to $v$ to form $T$). This explains the term $\xi_2$. For heavy internal vertices, we use even a weaker bound: $m(T,v) \le (\hat{K}+1)/\hat{K}$. This justifies the term $\xi_1$. To make our bound stronger, we will use a better estimation for $m(T,v)$ when $v$ has degree at least 2 in $T$ and corresponds to a light internal vertex in $\T$. Indeed, if this is the case, then $g(T,v) \ge 2k-1$; this bound is also sharp as $g(T,v) = 2(k-1)+1 = 2k-1$ for a rooted path on $k-1$ vertices with a leaf attached to the root (that is, a path on $k$ vertices rooted at a vertex adjacent to a leaf). Hence, for pairs of this type we have $m(T,v) \le (2k-1)/(2k-2)$. The total number of members of $\F_k$ is $k^{k-1}$ and we already know how many of them are not of this type. This justifies the term $\xi_3$.

Putting all ingredients together we get that a.a.s.\ $g(\T,r) \le \left( \xi_1 \xi_2 \xi_3 \xi_4 + o(1) \right)^n$, and so $c(n) \le n \left( \xi_1 \xi_2 \xi_3 \xi_4 + o(1) \right)^n = \left( \xi_1 \xi_2 \xi_3 \xi_4 + o(1) \right)^n$ as $n = (1+O(\log n / n))^n = (1+o(1))^n$. The proof is finished.
\end{proof}

The numerical values of the upper bounds for $c(n)$ ($K \in \{1, 2, \ldots, 30\}$ and $\hat{K} = 10,000$) following from Theorem~\ref{thm:upper-comp} are presented in Table~\ref{tab:tab1} (see column \emph{upper 2}). Note that in the computations we are aggregating very small numbers; therefore, in order to ensure numerical soundness of the results, we have performed them using 1,024 bit mantissa and rounding-up arithmetic. For $K=30$ we get the following values
$\xi_1<1.0000008$, $\xi_2<1.0005917$, $\xi_3<1.00049672$, $\xi_4<1.4180539$ that lead to the following upper bound which is the strongest bound we managed to obtain: 

\begin{theorem}\label{thm:upper-computer}
A.a.s.\ $c(n) \le 1.41960^n$.
\end{theorem}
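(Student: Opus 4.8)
The plan is to obtain Theorem~\ref{thm:upper-computer} as a direct numerical instantiation of Theorem~\ref{thm:upper-comp}. Concretely, I would apply Theorem~\ref{thm:upper-comp} with the parameters $K = 30$ and $\hat{K} = 10{,}000$; this already yields that a.a.s.\ $c(n) \le (\xi_1\xi_2\xi_3\xi_4 + o(1))^n$, so the entire remaining task is to evaluate the four explicit factors $\xi_1,\xi_2,\xi_3,\xi_4$ and verify that their product is below $1.41960$. In particular no new probabilistic argument is needed: the work is a (rigorous) computation.

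The four factors are computed as follows. For $\xi_4 = \prod_{k=1}^{30}\prod_{(T,v)\in\F_k} g(T,v)^{f_K(k)}$ I would reuse exactly the machinery behind Theorem~\ref{thm:lower-computer}: run Algorithm~\ref{alg:sub} to produce the aggregated counts $x(k,g)$ for $k = 1,\dots,30$, evaluate $f_{30}(k)$ from the closed form in Lemma~\ref{lemma:c_tv}, and assemble $\xi_4 = \prod_{k=1}^{30}\big(\prod_{g\in\nat} g^{x(k,g)}\big)^{f_{30}(k)}$, expecting $\xi_4 < 1.4180539$. For $\xi_1 = ((\hat{K}+1)/\hat{K})^{h(\hat{K})}$ I would compute $h(10{,}000) = 1 - \sum_{k=1}^{10{,}000}(k/e)^k/(k\cdot k!)$ from Corollary~\ref{cor:R}; since $h(K) = \Theta(1/\sqrt{K})$ and the base is $1.0001$, $\xi_1$ is within about $10^{-6}$ of $1$. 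Finally $\xi_2$ and $\xi_3$ are finite products over $k = 31,\dots,10{,}000$ of bases $(k+1)/k$ and $(2k-1)/(2k-2)$ raised to the exponents $k(k-1)^{k-2}e^{-k}/k!$ and $(k^{k-1}-k(k-1)^{k-2})e^{-k}/k!$; by Stirling these exponents decay like $k^{-3/2}$, so both products are small perturbations of $1$, expected to come out as $\xi_2 < 1.0005917$ and $\xi_3 < 1.00049672$. Multiplying, $\xi_1\xi_2\xi_3\xi_4 < 1.0000008 \cdot 1.0005917 \cdot 1.00049672 \cdot 1.4180539 < 1.41960$, which is the claimed bound.

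The hard part will not be mathematical but numerical: the computation aggregates quantities of wildly different magnitudes — the counts $x(k,g)$ grow super-exponentially in $k$ while the weights $e^{-k}/k!$ shrink correspondingly, and the exponents defining $\xi_2,\xi_3$ are astronomically small — so naive floating-point arithmetic could silently discard exactly the precision needed to distinguish $1.41959\ldots$ from $1.41960$. I would therefore carry out every step in high-precision arithmetic (e.g.\ a $1{,}024$-bit mantissa, as in the paper's convention) with \emph{directed} rounding, always rounding up when forming each $\xi_i$, so that each intermediate value is a certified over-estimate and the final product is a rigorous upper bound. Once that is in place, checking $\xi_1\xi_2\xi_3\xi_4 < 1.41960$ completes the proof, since $n = (1+o(1))^n$ absorbs the polynomial prefactor.
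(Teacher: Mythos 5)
Your proposal matches the paper's own proof: Theorem~\ref{thm:upper-computer} is obtained exactly by instantiating Theorem~\ref{thm:upper-comp} with $K=30$ and $\hat{K}=10{,}000$, computing $\xi_1,\xi_2,\xi_3,\xi_4$ in high-precision (1{,}024-bit mantissa) rounding-up arithmetic (with $\xi_4$ assembled via the $x(k,g)$ counts from Algorithm~\ref{alg:sub} and $f_{30}(k)$ from Lemma~\ref{lemma:c_tv}), and verifying $\xi_1\xi_2\xi_3\xi_4 < 1.41960$. The approach, parameter choices, and even the certified intermediate bounds you anticipate coincide with those reported in the paper, so there is nothing to add.
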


\section{Conclusions}\label{sec:conclustion}

We finish the paper with a few comments. 

\subsection{Conjecture}

Let us revisit the proof of Theorem~\ref{thm:upper-comp}. It follows that the ratio between upper and lower bound for $\sqrt[n]{c(n)}$ can be made arbitrarily close to
$$
\eta := \prod_{k \ge K+1} \prod_{(T,v) \in \F_k} m(T,v)^{e^{-k}/k!} = \prod_{k \ge K+1} m(k)^{k^{k-1} e^{-k}/k!},
$$
where 
$$
m(k) := \left( \prod_{(T,v) \in \F_k} m(T,v) \right)^{1/k^{k-1}}
$$
is a geometric mean of $m(T,v)$ over all members of $\F_k$. We partitioned $\F_k$ into two sets to get the two corresponding upper bounds for $m(T,v)$ ($(k+1)/k$ and $(2k-1)/(2k-2)$) which yielded constants $\xi_2$ and $\xi_3$. The improvement after partitioning of $\F_k$ is rather mild and the main reason for that was to determine the two significant digits of $\sqrt[n]{c(n)}$. On the other hand, one can easily partition $\F_k$ into more sets to improve the upper bound. We do not follow this approach as the following, much stronger, property must be true. It is safe to conjecture that $m(k)$ is a decreasing function of $k$ and this is verified to be the case for $1 \le k \le K=30$---see Table~\ref{tab:tab1} (column \emph{multiplier}). (In fact, it should converge to zero quite fast so the conjecture is really safe.) Unfortunately, at present, we cannot prove this property; we have tried a number of couplings between $\F_{k+1}$ and $\F_k$ but with no success. If the property holds, then
$$
\eta = \prod_{k \ge K+1} m(k)^{k^{k-1} e^{-k}/k!} \le \prod_{k \ge K+1} m(K)^{k^{k-1} e^{-k}/k!} = m(K)^{|R|/n}.
$$
Using $K=30$ and the numerical value of $m(30) \approx 1.00003886$ we make the following conjecture. The conjectured bounds implied by smaller values of $K$ can be found in Table~\ref{tab:tab1} (see column \emph{connj.\ upper}). 

\begin{conjecture}\label{con:upper_bound}
A.a.s.\ $c(n) \le 1.41806182^n$.
\end{conjecture}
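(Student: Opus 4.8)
The plan is to isolate the one step the paper leaves open, namely to prove that $m(k) \le m(K)$ for every integer $k > K$ with $K = 30$ (it is natural to aim for the stronger statement that $m(\cdot)$ is non-increasing, a property verified numerically for $1 \le k \le 30$). Granting this, the chain of inequalities already displayed in Section~\ref{sec:conclustion} gives $\eta \le m(K)^{|R|/n} = m(30)^{h(30)}$, and combining with the numerical facts $h(30)\approx 0.14434$, $m(30)\approx 1.00003886$ and $\xi_4 < 1.4180539$ for $K=30$ (the quantity $\xi_4$ being exactly the lower bound of Theorem~\ref{thm:lower-comp} for $K = 30$), Theorem~\ref{thm:upper-comp} yields
\[
c(n) \le \bigl( \xi_4 \cdot m(30)^{h(30)} + o(1) \bigr)^n < 1.41806182^n \quad\text{a.a.s.}
\]
So everything reduces to the comparison $m(k) \le m(30)$ for $k \ge 31$.

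The tool I would build on is an elementary monotonicity of $g$ under leaf deletion. Fix $(T,v) \in \F_{k+1}$ and any leaf $w$ of $T$ with $w \ne v$ (such a leaf exists since $T$ has at least two leaves); after order-preserving relabelling, $(T-w, v)$ is a member $(T',v) \in \F_k$. Every sub-tree of $T'$ through $v$ is a sub-tree of $T$ through $v$, and $T$ has at least one additional sub-tree through $v$ that uses $w$ (for instance $T$ itself), so $g(T',v) < g(T,v)$ and both are $\ge 2$; since $t \mapsto t/(t-1)$ is decreasing on $(1,\infty)$, this gives $m(T',v) > m(T,v)$. In other words, every leaf-deletion map $\F_{k+1}\to\F_k$ strictly increases $m$. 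The plan is to promote this to
\[
\log m(k+1) = \frac{1}{(k+1)^k}\sum_{(T,v)\in\F_{k+1}} \log m(T,v) \;\le\; \frac{1}{k^{k-1}}\sum_{(S,u)\in\F_k} \log m(S,u) = \log m(k)
\]
by distributing each $(T,v) \in \F_{k+1}$, with non-negative weights summing to one, over the rooted trees obtained from it by deleting a leaf $\ne v$ (all of which have strictly larger $\log m$) in such a way that the total weight received by each $(S,u)\in\F_k$ does not exceed $(k+1)^k/k^{k-1}$.

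The main obstacle — and the reason the paper reports that naive couplings fail — is precisely the balancing of these weights: the ratio $(k+1)^k/k^{k-1}$ is not an integer and equals the \emph{average} load, so it does not suffice that a leaf-deletion map exists; one must route the heavier loads onto the members of $\F_k$ with \emph{smaller} $m$ (equivalently, larger $g$), a Chebyshev-type rearrangement constraint tying the combinatorics of leaf attachment to the subtree statistic. I would attempt to make this precise by treating, for each $(S,u) \in \F_k$, the $k$ augmentations of it by a new leaf together with their images in $\F_{k+1}$ and arguing a local transportation bound there; failing that, one can abandon exact monotonicity and instead prove the one-sided estimate $\log m(k) \le \E\bigl[1/(G_k-1)\bigr]$, where $G_k$ is the number of sub-trees through the root of a uniformly random member of $\F_k$, and then bound the right-hand side by a lower-tail estimate: if, for suitable constants $\rho > 1 > \sigma > 0$, at most a $\sigma^k$ fraction of rooted trees on $k$ vertices have fewer than $\rho^k$ sub-trees through the root, then (using $G_k \ge k+1$) $\log m(k) \le \sigma^k/k + O(\rho^{-k})$, which one can hope to push below $\log m(30) \approx 3.886\cdot 10^{-5}$ for all $k \ge 31$ (the case $k=31$ being the tightest). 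Either route hinges on quantitative control of how $G_k$ concentrates — the very ingredient the present combinatorial approach does not yet supply, though the Central Limit Law for $\log G_k$ in~\cite{CaiJanson} strongly suggests the needed bound is true.
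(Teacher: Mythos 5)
The statement you are proving is presented in the paper only as a conjecture, and your proposal does not close that gap: it reproduces the paper's own conditional reduction and then sketches two routes for the missing ingredient without establishing either. The reduction itself is fine and is exactly the paper's reasoning in Section~\ref{sec:conclustion} --- if $m(k)\le m(30)$ for all $k\ge 31$, then $\eta\le m(30)^{h(30)}$ and, multiplied against $\xi_4<1.4180539$, this gives the stated $1.41806182^n$ (this is precisely how the authors arrive at Conjecture~\ref{con:upper_bound}). Your pointwise observation is also correct: deleting a non-root leaf from $(T,v)\in\F_{k+1}$ yields $(T',v)\in\F_k$ with $g(T',v)<g(T,v)$, hence $m(T',v)>m(T,v)$ since $t\mapsto t/(t-1)$ is decreasing. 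But pointwise monotonicity along leaf-deletion maps does not imply the inequality between the geometric means $m(k+1)$ and $m(k)$, because the deletion correspondence between $\F_{k+1}$ and $\F_k$ is many-to-many with highly non-uniform multiplicities; the required transport of weights, with total load on each $(S,u)\in\F_k$ at most $(k+1)^k/k^{k-1}$, is exactly the coupling the authors report having tried and failed to construct. You name this obstacle yourself but do not resolve it, so the central step $m(k)\le m(30)$ for $k\ge 31$ remains unproven.

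Your fallback route has the same status. The bound $\log m(k)\le \E\bigl[1/(G_k-1)\bigr]$ and the tail-splitting estimate $\log m(k)\le \sigma^k/k+O(\rho^{-k})$ are valid as conditional statements, but the hypothesis --- that at most a $\sigma^k$ fraction of rooted trees on $k$ vertices have fewer than $\rho^k$ sub-trees through the root, with constants strong enough to beat $\log m(30)\approx 3.9\cdot 10^{-5}$ already at $k=31$ --- is not proved, and the appeal to the central limit law of~\cite{CaiJanson} for $\log c(n)$ does not supply an exponentially small lower-tail bound for the root-subtree count of a uniform rooted tree of fixed size $k$. In short: the conditional arithmetic matches the paper, but the one genuinely open ingredient (monotonicity of $m(k)$, or an adequate lower-tail estimate for $G_k$) is still missing, so the statement remains a conjecture rather than a theorem.
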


In fact, it feels safe to conjecture that the first 5 digits of $\sqrt[n]{c(n)}$ are 1.41805. If the desired property is proved, we would certainly go for $k=31$ to squeeze the last drop from the argument.

\subsection{Upper bound based on degree distribution}

Let $\Ss_\pi$ be the family of all trees on $n$ vertices with a given non-increasing degree sequence $\pi = (d_0, d_1, \ldots, d_{n-1})$. As mentioned in the introduction, it is known which extremal tree from $\Ss_\pi$ has the largest number of sub-trees~\cite{Zhang2013}. This tree, $\T_\pi$, can be constructed in a greedy way using the breadth-first search method. First, label the vertex with the largest degree $d_0$ as $v_{01}$. Then, label the neighbours of $v_{01}$ as $v_{11},v_{12},\ldots, v_{1d_0}$ from ``left to right'' and let $d(v_{1i}) = d_i$ for $i = 1,\ldots, d_0$. Then repeat this for all newly labelled vertices until all degrees are assigned.

As computed in~(\ref{eq:degree_distr}), a.a.s.\ the number of vertices of degree $k$ is $(1+o(1)) n e^{-1}/(k-1)!$. Using that and the construction mentioned above, we get that a.a.s.\ $c(n) \le 1.52745^n$ which gives a non-trivial bound but is far away from the one we obtained. Of course, this is not too surprising, and it confirms that the degree distribution is not a crucial factor in our problem; the number of sub-trees of $\T$ is governed by the distribution of small rooted trees from $\F = \bigcup_{k=1}^K \F_k$.

\subsection{Simulations}

In order to check the behaviour of $c(n)$ for small values of $n$ we have used Monte Carlo simulation approach. For values of $n$ up to $n=1,048,576$, we report estimate of $\sqrt[n]{\E(c(n))}$, median of its distribution, and bounds of 90\% confidence interval obtained using bootstrapping (see Table~\ref{tab:tab2}). Observe that, because of using 1,024 independent replicates of the simulation, the obtained estimates are very precise.
Already for $n=1,048,576$ the mean lies within the asymptotic bounds for $c(n)$ that we were able to prove in the paper and has three decimal digits of agreement with the conjectured value of $c(n)$.

\bibliographystyle{apalike}
\bibliography{trees}

\begin{table}
    \centering
    \begin{tabular}{|r||c|c|c|c|c|c|}
        \hline 
        $K$ & \emph{lower} & \emph{upper 1} & \emph{upper 2} & \emph{conj.\ upper} & $|R|/n$ & \emph{multiplier} \\
        \hline 
        1  & 1.29045464 & 2.00000000 & 1.43208050 & 2.00000000 & 0.63212055 & 2.00000000 \\
        2  & 1.36324560 & 1.92362926 & 1.43208050 & 1.66745319 & 0.49678527 & 1.50000000 \\
        3  & 1.39061488 & 1.86325819 & 1.43208050 & 1.55596710 & 0.42210467 & 1.30495588 \\
        4  & 1.40310215 & 1.81740886 & 1.43138632 & 1.50231117 & 0.37326296 & 1.20085291 \\
        5  & 1.40946163 & 1.78177319 & 1.43028338 & 1.47223973 & 0.33816949 & 1.13753267 \\
        6  & 1.41293442 & 1.75332505 & 1.42906778 & 1.45396472 & 0.31139897 & 1.09628284 \\
        7  & 1.41492327 & 1.73007752 & 1.42789078 & 1.44231043 & 0.29011286 & 1.06831339 \\
        8  & 1.41610182 & 1.71070328 & 1.42681559 & 1.43464704 & 0.27266454 & 1.04887463 \\
        9  & 1.41681820 & 1.69428865 & 1.42586149 & 1.42950426 & 0.25802502 & 1.03515096 \\
        10 & 1.41726225 & 1.68018579 & 1.42502733 & 1.42600450 & 0.24551402 & 1.02536358 \\
        11 & 1.41754178 & 1.66792305 & 1.42430332 & 1.42359935 & 0.23466147 & 1.01833777 \\
        12 & 1.41771993 & 1.65714906 & 1.42367665 & 1.42193478 & 0.22513081 & 1.01327326 \\
        13 & 1.41783464 & 1.64759676 & 1.42313429 & 1.42077680 & 0.21667390 & 1.00961308 \\
        14 & 1.41790914 & 1.63905962 & 1.42266416 & 1.41996815 & 0.20910325 & 1.00696374 \\
        15 & 1.41795786 & 1.63137546 & 1.42225550 & 1.41940180 & 0.20227419 & 1.00504449 \\
        16 & 1.41798993 & 1.62441515 & 1.42189906 & 1.41900425 & 0.19607309 & 1.00365361 \\
        17 & 1.41801115 & 1.61807465 & 1.42158697 & 1.41872470 & 0.19040929 & 1.00264559 \\
        18 & 1.41802525 & 1.61226923 & 1.42131256 & 1.41852784 & 0.18520944 & 1.00191512 \\
        19 & 1.41803467 & 1.60692913 & 1.42107030 & 1.41838904 & 0.18041349 & 1.00138592 \\
        20 & 1.41804099 & 1.60199646 & 1.42085548 & 1.41829108 & 0.17597172 & 1.00100264 \\
        21 & 1.41804523 & 1.59742274 & 1.42066420 & 1.41822188 & 0.17184260 & 1.00072515 \\
        22 & 1.41804809 & 1.59316706 & 1.42049318 & 1.41817297 & 0.16799109 & 1.00052431 \\
        23 & 1.41805003 & 1.58919466 & 1.42033966 & 1.41813836 & 0.16438742 & 1.00037899 \\
        24 & 1.41805134 & 1.58547582 & 1.42020132 & 1.41811387 & 0.16100611 & 1.00027389 \\
        25 & 1.41805223 & 1.58198494 & 1.42007621 & 1.41809652 & 0.15782519 & 1.00019789 \\
        26 & 1.41805284 & 1.57869987 & 1.41996266 & 1.41808423 & 0.15482562 & 1.00014295 \\
        27 & 1.41805326 & 1.57560132 & 1.41985928 & 1.41807551 & 0.15199081 & 1.00010326 \\
        28 & 1.41805354 & 1.57267244 & 1.41976483 & 1.41806933 & 0.14930621 & 1.00007457 \\
        29 & 1.41805374 & 1.56989841 & 1.41967830 & 1.41806494 & 0.14675899 & 1.00005383 \\
        30 & 1.41805387 & 1.56726614 & 1.41959880 & 1.41806182 & 0.14433784 & 1.00003886 \\
        \hline
    \end{tabular}
    
    \caption{Asymptotic lower/upper bounds for $c(n)$, $|R|/n$, and multipliers for various values of $K$.\label{tab:tab1}}
\end{table}

\begin{table}
    \centering
    \begin{tabular}{|r|c|c|c|c|c|}
        \hline 
        $n$ \emph{(tree size)} & \emph{mean} & \emph{5\%} & \emph{50\%} & \emph{95\%} \\
        \hline
        4,096     & 1.432278 & 1.429591 & 1.432278 & 1.432662 \\
        16,384    & 1.424623 & 1.424245 & 1.424615 & 1.424704 \\
        65,536    & 1.421405 & 1.420636 & 1.421405 & 1.421429 \\
        262,144   & 1.419931 & 1.419516 & 1.419931 & 1.419937 \\
        1,048,576 & 1.418849 & 1.418774 & 1.418849 & 1.418851 \\
        \hline
    \end{tabular}
    \caption{Estimation of $\sqrt[n]{\E(c(n))}$ for different finite tree sizes $n$ estimated using simulation (1,024 independent replications). 5\%, 50\% and 95\% percentiles of mean estimator distribution obtained using $10^5$ bootstrap replications. \label{tab:tab2}}
\end{table}

\end{document}